\documentclass[11pt, leqno]{amsart}


\usepackage{amssymb, amsmath, color, graphicx, amsbsy, amsfonts}
\usepackage{t1enc}\pagestyle{myheadings}
\usepackage[cp1250]{inputenc}
\usepackage[all]{xy}
\usepackage{colortbl}

\def\cal#1{{\mathcal #1}}

\parskip 1,5mm
\parindent 7mm
\voffset -5mm
\hoffset -5mm
\textwidth 163mm
\textheight 220mm
\oddsidemargin 8mm
\evensidemargin 7mm
\footnotesep 3mm \hbadness 10000

\newtheorem{theorem}{Theorem}[section]
\newtheorem{definition}[theorem]{Definition}
\newtheorem{lemma}[theorem]{Lemma}
\newtheorem{proposition}[theorem]{Proposition}

\newtheorem{remark}[theorem]{Remark}

\newtheorem{ass}[theorem]{}

\numberwithin{equation}{section}


\newcommand{\be}{\begin{equation}}
\newcommand{\ee}{\end{equation}}

\newcommand{\bea}{\begin{eqnarray}}
\newcommand{\eea}{\end{eqnarray}}

\newcommand{\beab}{\begin{eqnarray*}} \newcommand{\eeab}{\end{eqnarray*}}

\def\vp{\varphi}

\def\Lip{\mathrm{Lip}}

\def\ind{\mathrm{ind}\,}

\def\wt{\widetilde}

\def\1{{\bf 1}}
\def\0{{\bf 0}}
\def\R{\mathbb{R}}

\def\la{\langle}
\def\ra{\rangle}

\def\dist{\mathrm{dist}\,}

\def\codim{\mathrm{codim}\,}

\def\wt{\widetilde}

\def\part{\partial}


\def\r{\mathbb{R}}
\def\rn{\mathbb{R}^N}
\def\z{\mathbb{Z}}

\def\eps{\varepsilon}

\def\irn{\int_{\r^N}}

\def\vp{\varphi}
\def\wt{\widetilde}
\def\ol{\overline}

\def\cl{\mathcal{L}}

\def\bbw{\mathbb{W}}

\def\Lip{\mathrm{Lip}}


\title[Bifurcation from infinity]{Bifurcation from infinity for an asymptotically linear Schr\"odinger equation}

\author{Wojciech Kryszewski}
\address{Faculty of Mathematics and Computer Sciences, Nicolaus Copernicus University, Chopina 12/18, 87-100 Toru\'n, Poland}
\email{wkrysz@mat.umk.pl}

\author{Andrzej Szulkin}
\address{Department of Mathematics, Stockholm University, 106 91 Stockholm, Sweden}
\email{andrzejs@math.su.se}

\subjclass[2010]{35J20, 35J91, 58E05, 58E07}

\keywords{Bifurcation from infinity, Morse theory, Schr\"odinger equation, asymptotically linear}

\date{}

\begin{document}

\baselineskip=15pt
\maketitle

\begin{abstract}
We consider the asymptotically linear Schr\"odinger equation \eqref{se} and show that if $\lambda_0$ is an isolated eigenvalue for the linearization at infinity, then under some additional conditions there exists a sequence $(u_n,\lambda_n)$ of solutions such that $\|u_n\|\to\infty$ and $\lambda_n\to\lambda_0$. Our results extend those by Stuart \cite{st3}. We use degree theory if the multiplicity of $\lambda_0$ is odd and Morse theory (or more specifically, Gromoll-Meyer theory) if it is not. 
\end{abstract}

\section{Introduction}

In this paper we consider the Schr\"odinger equation
\begin{equation} \label{se}
-\Delta u + V(x)u = \lambda u + f(x,u), \quad x\in\rn,
\end{equation}
where $\lambda$ is a real parameter, $V\in L^\infty(\rn)$,  $f(x,u)/u\to m(x)$ as $|u|\to\infty$, $m\in L^\infty(\rn)$ and $\lambda_0$ is an isolated eigenvalue of finite multiplicity for $\cl := -\Delta+V(x)-m(x)$. $\cl$ will be considered as an operator in $L^2(\rn)$. It is well known (see e.g.\ \cite{Simon}) that $\cl$ is selfadjoint and its domain $D(\cl)$ is the Sobolev space $H^2(\rn)$.
We shall show that if the distance from $\lambda_0$ to the essential spectrum  $\sigma_e(\cl)$ of $\cl$ is larger than the Lipschitz constant of $f-m$ (with respect to the $u$-variable), then there exists a sequence of solutions $(u_n,\lambda_n)\subset H^2(\rn)\times\r$ such that $\|u_n\|\to\infty$ and $\lambda_n\to\lambda_0$. See Theorems \ref{thm2} and \ref{thm2a} for more precise statements. We shall say that these solutions \emph{bifurcate from infinity} or that $\lambda_0$ is \emph{an asymptotic bifurcation point}. Our results extend those by Stuart \cite{st3} who has shown using degree theory that if $f(x,u)=f(u)+h(x)$, then asymptotic bifurcation occurs if $\lambda_0$ is of odd multiplicity and the bifurcating set contains a continuum.

Both here and in \cite{st3} (see also \cite{st2}) the result is first formulated in terms of an abstract operator equation. Let $E$ be a Hilbert space, $L: D(L)\to E$ a selfadjoint linear operator and let $N: E\to E$ be a continuous nonlinear operator which is asymptotically linear in the sense of Hadamard ($H$-asymptotically linear for short, see Definition \ref{defi1}(i)). We show that if $\lambda_0$ is an isolated eigenvalue of odd multiplicity for $L$ and if the distance $\dist(\lambda_0, \sigma_e(L))$ from $\lambda_0$ to the essential spectrum of $L$ is larger than the asymptotic Lipschitz constant of $N$ (introduced in Definition \ref{defi1}(ii)), then $\lambda_0$ is an asymptotic bifurcation point for the equation
\begin{equation} \label{oe}
Lu = \lambda u + N(u), \quad u\in D(L).
\end{equation}
Here we have assumed for notational simplicity that the asymptotic derivative $N'(\infty)$ of $N$ is 0, see Theorem \ref{thm1} for the full statement. This theorem slightly extends some results in \cite{st2, st3} where the distance condition on $\lambda_0$ was somewhat stronger. If $N$ is the gradient of a $C^1$-functional and $\lambda_0$ is an isolated eigenvalue of finite (not necessarily odd) multiplicity, we show that under an additional hypothesis $\lambda_0$ is an asymptotic bifurcation point for \eqref{oe}. The exact statement is given in Theorem \ref{thm1a}. Existence of asymptotic bifurcation when the multiplicity of $\lambda_0$ is even seems to be new  and is the main abstract result of this paper. A related problem $u = \lambda(Au+N(u))$ has been considered in \cite{dh, to2} under the assumptions that $A$ is bounded linear, $A+N$ is the gradient of a functional and a $k$-set contraction, and $N$ is asymptotically linear \emph{in the stronger sense of Fr\'echet}. It was then shown that each eigenvalue $1/\lambda_0$ of $A$ with $|\lambda_0k|<1$ is an asymptotic bifurcation point. However, the arguments there seem to break down in our case.

The proofs in \cite{st2, st3} were effected by first making the inversion $u\mapsto u/\|u\|^2$ (an idea that goes back to Rabinowitz \cite{ra2} and Toland \cite{to}). In this way the problem is  transformed to that of looking for bifurcation from 0 instead of infinity. In the next step a finite-dimensional reduction is performed and finally it is shown that since $\lambda_0$ has odd multiplicity, the Brouwer degree for the linearization of the reduced operator at $u=0$ changes as $\lambda$ passes through $\lambda_0$. This forces bifurcation, and an additional argument which goes back to \cite{ra1} and uses degree theory in an essential way, shows that there is a continuum bifurcating from $(0,\lambda_0)$. Since the degree does not change if the multiplicity of $\lambda_0$ is even, in Theorem \ref{thm1a} we use Morse theory instead, and therefore we need the assumption that $N$ is the gradient of a functional. Morse theory can only assert that there exists a sequence, and not necessarily a continuum, bifurcating from infinity. Let us also point out that in \cite{st2} a more general operator equation of the form $F(\lambda,u)=0$ has been considered ($F(\lambda,\cdot)$ acts between two Banach spaces). Here we will only be concerned with \eqref{oe}, and this allows some simplifications of Stuart's arguments (in particular in the part involving the finite-dimensional reduction). Since we do not make inversion, we get a less restrictive bound for the distance from $\lambda_0$ to the essential spectrum.

The fact that $\dist(\lambda_0,\sigma_e(L))$ is larger than the Lipschitz constant of $N$ at infinity is needed in order to perform a finite-dimensional reduction of Liapunov-Schmidt type. As we shall see, if the distance condition is satisfied, then one can find an orthogonal decomposition $E=Z\oplus W$, where $\dim Z<\infty$, such that writing $u=z+w\in Z\oplus W$, it is possible to use the contraction mapping principle in order to express $w$ as a function of $z$ and $\lambda$. Although one may think this is only a technical condition, it has been shown by Stuart \cite[Section 5.2]{st3} that there exist examples where asymptotic bifurcation does not occur at eigenvalues of odd multiplicity (and in Section 5.3 there one finds an example where asymptotic bifurcation occurs when $\lambda_0$ is not an eigenvalue). So the above condition, or some other, is needed.

The reason for requiring $N$ to be $H$-asymptotically and not just asymptotically linear (in the sense of Fr\'echet) is that, in contrast to the situation when \eqref{se} is considered for $x$ in a bounded domain, we cannot expect the Nemytskii operator $N$ induced by $f$ to be asymptotically linear. Indeed, it has been shown in \cite{st1} that if $f(u)/u\to m$ as $|u|\to\infty$, then $N$ is always $H$-asymptotically linear, and it is asymptotically linear if and only if $f(u)=mu$. In the proof of Theorem \ref{thm2} we show that also the Nemytskii operator corresponding to $f(x,u)$ is $H$-asymptotically linear if $f(x,u)/u\to m(x)$ as $|u|\to \infty$. The related concept of $H$-differentiability in the context of elliptic equations in $\rn$ has been introduced in a series of papers by Ev\'equoz and Stuart, see e.g. \cite{es}. 

Now we can state our main results. The symbols $N'(\infty)$ and $\Lip_\infty$ (denoting asymptotic $H$-derivative and asymptotic Lipschitz constant) which appear below are introduced in Definition \ref{defi1}. 

\begin{theorem} \label{thm1}
Let $E$ be a Hilbert space and suppose that $L: D(L)\to E$ is a selfadjoint linear operator. Suppose further that \\
(i) $N$ is $H$-asymptotically linear and $N'(\infty): E\to E$ is selfadjoint, \\
(ii) $\lambda_0$ is an isolated eigenvalue of odd multiplicity for $L-N'(\infty)$ and 
\[
\Lip_\infty(N-N'(\infty))< \dist(\lambda_0,\sigma_e(L-N'(\infty))).
\]
Then $\lambda_0$ is an asymptotic bifurcation point for equation \eqref{oe}. Moreover, there exists a continuum bifurcating from infinity at $\lambda_0$.
\end{theorem}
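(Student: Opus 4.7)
The plan is to carry out the inversion/Lyapunov-Schmidt/degree blueprint described in the introduction, with the distance-to-essential-spectrum condition playing its role at the reduction step and the odd multiplicity entering only at the final degree computation.

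First, replacing $L$ by $L-N'(\infty)$ and $N$ by $N-N'(\infty)$ reduces the setting to $N'(\infty)=0$; the shifted operator remains selfadjoint because $N'(\infty)$ is selfadjoint by hypothesis. Following \cite{ra2,to}, apply the inversion $v:=u/\|u\|^2$, so that $\|v\|=1/\|u\|$ and \eqref{oe} transforms into
\[
Lv = \lambda v + M(v), \qquad M(v) := \|v\|^2 N\bigl(v/\|v\|^2\bigr),\quad M(0):=0.
\]
Asymptotic bifurcation of $u$ from infinity at $\lambda_0$ becomes ordinary bifurcation of $v$ from $(0,\lambda_0)$. The $H$-asymptotic linearity with $N'(\infty)=0$ translates into $\|M(v)\|=o(\|v\|)$ near $v=0$, and $\Lip_\infty(N)$ becomes the local Lipschitz constant of $M$ near the origin.

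Set $d:=\dist(\lambda_0,\sigma_e(L))$. Since eigenvalues of finite multiplicity of a selfadjoint operator cannot accumulate outside $\sigma_e(L)$, only finitely many such eigenvalues lie in $(\lambda_0-d,\lambda_0+d)$. Let $Z$ be the direct sum of their (finite-dimensional) eigenspaces and $W:=Z^\perp$; then $L|_W$ has no spectrum in the open interval $(\lambda_0-d,\lambda_0+d)$, so $\|(L-\lambda)^{-1}|_W\|$ is bounded and tends to $1/d$ as $\lambda\to\lambda_0$. Writing $v=z+w\in Z\oplus W$, the projection of the equation onto $W$ reads
\[
w = (L-\lambda)^{-1}P_W M(z+w).
\]
The strict inequality $\Lip_\infty(N)<d$ makes the right-hand side a contraction in $w$ on a small ball, uniformly for $\lambda$ near $\lambda_0$ and $z$ in a small neighbourhood of $0$; the Banach fixed-point theorem then supplies a continuous map $w=w(z,\lambda)$ with $w(0,\lambda_0)=0$. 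I expect this uniform contraction step---keeping the radii and Lipschitz constants compatible as $\lambda$ varies, and verifying continuity of $M$ at $0$ carefully enough that the $H$-asymptotic linearity is not lost through the inversion---to be the main technical obstacle; once it is in place, the rest is essentially finite-dimensional bookkeeping.

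Substituting back yields the bifurcation equation
\[
G(z,\lambda) := (L-\lambda)z - P_Z M\bigl(z+w(z,\lambda)\bigr) = 0, \qquad z\in Z.
\]
Because $M(v)=o(\|v\|)$ at $0$, the Brouwer degree $\deg(G(\cdot,\lambda),B_r,0)$ on a small ball in $Z$ equals the degree of the linear map $L-\lambda$ restricted to $Z$. As $\lambda$ crosses $\lambda_0$, only the action on $\ker(L-\lambda_0)$ changes sign; since $\dim\ker(L-\lambda_0)$ is odd by hypothesis, the degree jumps by a factor $-1$. This forces nontrivial zeros of $G$ in every neighbourhood of $(0,\lambda_0)$, and the classical Rabinowitz continuation argument \cite{ra1}---contradicting the degree jump by any hypothetical bounded isolating neighbourhood of the trivial branch---upgrades them to a continuum bifurcating from $(0,\lambda_0)$. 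Undoing the inversion $u=v/\|v\|^2$ converts this into the required continuum of solutions of \eqref{oe} with $\|u\|\to\infty$ and $\lambda\to\lambda_0$.
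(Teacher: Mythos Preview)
Your overall strategy---finite-dimensional reduction followed by a Brouwer degree jump across $\lambda_0$---matches the paper's, but you take the inversion route $v=u/\|u\|^2$ while the paper deliberately avoids it. This is not a stylistic choice: the introduction states that Stuart's inversion-based proofs in \cite{st2,st3} require a distance condition strictly stronger than (ii), and that ``since we do not make inversion, we get a less restrictive bound''. Your outline is essentially Stuart's scheme, so as written it proves a weaker theorem, not Theorem~\ref{thm1}.

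The gap is your sentence ``$\Lip_\infty(N)$ becomes the local Lipschitz constant of $M$ near the origin''. This is asserted without argument and does not hold in general. The inversion $\iota(v)=v/\|v\|^2$ has $D\iota(v)=\|v\|^{-2}R_v$ with $R_v$ a reflection, so pointwise derivative norms behave well, but the Lipschitz constant of $M(v)=\|v\|^2N(\iota(v))$ on a punctured neighbourhood of $0$ involves pairs $v,v'$ whose images $\iota(v),\iota(v')$ can have wildly different norms; the resulting estimate picks up extra factors and does not collapse to $\Lip_\infty(N)$. You correctly flag the contraction step as the main obstacle, but you treat it as bookkeeping---it is exactly where the sharp constant is won or lost, and it is the reason Stuart's hypothesis is stronger.

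The paper sidesteps this by doing the Liapunov--Schmidt reduction directly at infinity. Theorem~\ref{gamma} identifies $\inf_{W\in\mathcal W}\|(L|_{W\cap D(L)})^{-1}\|$ with $1/\dist(\lambda_0,\sigma_e(L))$, so hypothesis (ii) yields a subspace $W$ with $\Lip_\infty(N)\cdot\|(L|_{W\cap D(L)})^{-1}\|<1$; Proposition~\ref{reduction} then solves $w=(L_\lambda|_W)^{-1}PN(w+z)$ by contraction for $\|z\|\ge R$, where the relevant Lipschitz constant is exactly $\Lip_\infty(N)$, with no inversion loss. The degree computation is run on large balls $B_{R_0}(0)\subset Z$, and the continuum is obtained by one-point compactifying $Z$ and invoking Whyburn's lemma (Lemma~\ref{whyburn}) rather than pulling a Rabinowitz continuum back through an inversion.
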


By a continuum bifurcating from infinity at $\lambda_0$ we mean a closed connected set $\Gamma\subset E\times\r$ of solutions of \eqref{oe} which contains a sequence $(u_n,\lambda_n)$ such that $\|u_n\|\to\infty$, $\lambda_n\to\lambda_0$. This theorem should be compared with Theorem 4.2 and Corollary 4.3 in \cite{st3} (see also Theorem 6.3 in \cite{st2}) where the distance condition was somewhat stronger than in (ii) above. 
The main ingredient in the proof is a finite-dimensional reduction which roughly speaking goes as follows. Let $W$ be an $L$-invariant subspace of $E$ such that $\codim W<\infty$ and $Z := W^\bot\subset D(L)$. Let $P: E\to W$ be the orthogonal projection and write $w=Pu$, $z=(I-P)u$. Then \eqref{oe} is equivalent to the system 
\begin{align*}
L w -\lambda w&=PN(w+z),\\
Lz- \lambda z&=(I-P)N(w+z).
\end{align*}
Choosing an appropriate $W$, $\delta>0$ small enough and $R>0$ large enough, one can solve uniquely for $w$ in the first equation provided $|\lambda-\lambda_0|\le\delta$ and $\|z\|\ge R$. In this way we obtain $w=w(\lambda,z)$ which inserted in the second equation gives a (finite-dimensional) problem on $Z\setminus B_R(0)$.
See Proposition \ref{reduction} for more details. Now the proof of Theorem \ref{thm1} is completed by a well-known argument using Brouwer's degree. 

If $N$ is a potential operator, then the reduced problem has variational structure. More precisely, suppose $N(u)=\nabla\psi(u)$ for some $\psi\in C^1(E,\r)$ and let  $\Phi_\lambda(u) := \frac12\la Lu-\lambda u,u\ra -\psi(u)$. Then the functional $\vp_\lambda$ given by $\vp_\lambda(z) = \Phi_\lambda(w(\lambda,z)+z)$ is of class $C^1$ and $z\in Z\setminus \ol B_R(0)$ is a critical point of $\vp_\lambda$ if and only if $u=w(\lambda,z)+z$ is a solution of \eqref{oe}, see Proposition \ref{varred}. Recall that a functional $\vp$ is said to satisfy the Palais-Smale condition ((PS) for short) if each sequence $(z_n)$ such that $\vp(z_n)$ is bounded and $\vp'(z_n)\to 0$ contains a convergent subsequence.

\begin{theorem} \label{thm1a}
Let $E$ be a Hilbert space and suppose that $L: D(L)\to E$ is a selfadjoint linear operator. Suppose further that \\
(i) $N$ is a potential operator, i.e.\ there exists a functional $\psi\in C^1(E,\r)$ such that $\nabla\psi(u)=N(u)$ for all $u\in E$, \\
(ii) $N$ is $H$-asymptotically linear and $N'(\infty): E\to E$ is selfadjoint, \\
(iii) $\lambda_0$ is an isolated eigenvalue of finite multiplicity for $L-N'(\infty)$ and 
\[
\Lip_\infty(N-N'(\infty))< \dist(\lambda_0,\sigma_e(L-N'(\infty))).
\]
If $\vp_{\lambda_0}$ satisfies (PS), then $\lambda_0$ is an asymptotic bifurcation point for equation \eqref{oe}.
\end{theorem}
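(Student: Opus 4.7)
The plan is a contradiction argument combining the variational finite-dimensional reduction of Proposition \ref{varred}, the Rabinowitz--Toland inversion $z\mapsto z/\|z\|^2$, and the Gromoll--Meyer theory of critical groups. As in Theorem \ref{thm1}, one first reduces to $N'(\infty)=0$ by replacing $L,N$ with $L-N'(\infty),\,N-N'(\infty)$ (hypotheses (ii)--(iii) are preserved since $N'(\infty)$ is selfadjoint). Proposition \ref{varred} then furnishes $\delta,R>0$, a finite-dimensional $L$-invariant subspace $Z\subset D(L)$ spanned by the eigenspaces of $L$ near $\lambda_0$, and a $C^1$ reduced functional $\varphi_\lambda(z)=\Phi_\lambda(z+w(\lambda,z))$ on $Z\setminus\overline{B_R(0)}$ for $|\lambda-\lambda_0|\le\delta$, whose critical points are in bijection with large-norm solutions of \eqref{oe}. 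Since $w$ is obtained by a contraction and $\|N(u)\|=o(\|u\|)$, the fixed-point estimate yields $\|w(\lambda,z)\|=o(\|z\|)$ uniformly in $\lambda$; integrating $N=\nabla\psi$ gives $\psi(u)=o(\|u\|^2)$, and using the $L$-invariance and orthogonality of $Z,W$ one obtains
$$\varphi_\lambda(z)=\tfrac12\la(L-\lambda)z,z\ra+o(\|z\|^2)\qquad\text{as }\|z\|\to\infty.$$

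Next I would perform the inversion $v:=z/\|z\|^2$ and set $\wt\varphi_\lambda(v):=\|v\|^4\varphi_\lambda(v/\|v\|^2)$, $\wt\varphi_\lambda(0):=0$. A direct calculation shows that $\wt\varphi_\lambda$ extends to a $C^1$ functional on $B_{1/R}(0)\subset Z$ with $\wt\varphi_\lambda(v)=\tfrac12\la(L-\lambda)v,v\ra+o(\|v\|^2)$, and its nontrivial critical points in $B_{1/R}(0)$ correspond to critical points of $\varphi_\lambda$ satisfying $\|z\|>R$. If $\lambda_0$ is \emph{not} an asymptotic bifurcation point, then after shrinking $\delta$ there exists $\rho\in(0,1/R]$ such that $v=0$ is the unique critical point of $\wt\varphi_\lambda$ in $\overline{B_\rho(0)}$ for every $\lambda\in[\lambda_0-\delta,\lambda_0+\delta]$.

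I would then apply Gromoll--Meyer theory. For small $\eps\in(0,\delta)$ the quadratic form $\la(L-\lambda)v,v\ra$ is non-degenerate on $Z$ at $\lambda=\lambda_0\pm\eps$, with Morse indices $m^-$ and $m^-+m_0$, where $m_0\ge 1$ denotes the multiplicity of $\lambda_0$. A standard Morse-lemma homotopy of the $o(\|v\|^2)$-tail, keeping $0$ the unique critical point in $\overline{B_\rho(0)}$, yields $C_q(\wt\varphi_{\lambda_0-\eps},0)=\delta_{q,m^-}\Z$ and $C_q(\wt\varphi_{\lambda_0+\eps},0)=\delta_{q,m^-+m_0}\Z$. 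On the other hand, the homotopy invariance of critical groups along the continuous family $\lambda\mapsto\wt\varphi_\lambda$, with $0$ remaining an isolated critical point throughout, forces $C_q(\wt\varphi_\lambda,0)$ to be constant in $\lambda\in[\lambda_0-\eps,\lambda_0+\eps]$. These two conclusions disagree since $m_0\ge 1$, and this contradiction proves the theorem.

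The hard part will be justifying the homotopy invariance of $C_q(\wt\varphi_\lambda,0)$ across the degenerate parameter $\lambda=\lambda_0$ in a $C^1$ setting. The Morse-lemma computations at $\lambda\ne\lambda_0$ are routine thanks to non-degeneracy of the quadratic part, but crossing $\lambda=\lambda_0$ requires the Gromoll--Meyer splitting/shifting theorem, whose underlying deformation rests precisely on the hypothesized (PS) condition for $\varphi_{\lambda_0}$ (equivalently, for $\wt\varphi_{\lambda_0}$ at $v=0$). Secondary technical points --- the uniform-in-$\lambda$ bound on $\|w(\lambda,z)\|$, the gradient estimate $\nabla\varphi_\lambda(z)=(L-\lambda)z+o(\|z\|)$ needed for $C^1$-regularity of $\wt\varphi_\lambda$ at $0$, and uniformity over compact $\lambda$-intervals --- will follow from the contraction construction and $H$-asymptotic linearity.
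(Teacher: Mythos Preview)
Your inversion step contains a genuine gap: the functional $\wt\varphi_\lambda(v)=\|v\|^4\varphi_\lambda(v/\|v\|^2)$ does \emph{not} have the claimed critical-point correspondence. A direct computation (product rule plus the Jacobian of $v\mapsto z=v/\|v\|^2$) yields, for $v\ne 0$,
\[
\nabla\wt\varphi_\lambda(v)=4\|v\|^2\varphi_\lambda(z)\,v+\|v\|^2\nabla\varphi_\lambda(z)-2\la\nabla\varphi_\lambda(z),v\ra\,v,
\]
so $\nabla\wt\varphi_\lambda(v)=0$ is equivalent to $\nabla\varphi_\lambda(z)=\dfrac{4\varphi_\lambda(z)}{\|z\|^2}\,z$, not to $\nabla\varphi_\lambda(z)=0$. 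The factor $\|v\|^4$ does recover the quadratic leading part (a homogeneous quadratic is invariant under this inversion--rescaling), which is why your expansion $\wt\varphi_\lambda(v)=\tfrac12\la L_\lambda v,v\ra+o(\|v\|^2)$ is correct; but any radial weight $\rho(\|v\|)$ introduces the extra term $\varphi_\lambda(z)\nabla\rho$ and destroys the variational correspondence for the remainder. Consequently the contradiction hypothesis ``no large critical points of $\varphi_\lambda$'' does not imply that $0$ is an isolated critical point of $\wt\varphi_{\lambda_0}$, and the (PS) assumption on $\varphi_{\lambda_0}$ does not obviously rescue this: along a sequence $\nabla\wt\varphi_{\lambda_0}(v_n)=0$, $v_n\to 0$, one only obtains $z_n/\|z_n\|\to\hat z\in N(L)$, which forces neither $\nabla\varphi_{\lambda_0}(z_n)\to 0$ nor $\varphi_{\lambda_0}(z_n)$ bounded. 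This is precisely why the Rabinowitz--Toland inversion is normally performed on the \emph{equation} rather than on the functional, at the cost of losing gradient structure.

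The paper's proof sidesteps this entirely by avoiding inversion. After extending $\varphi_\lambda$ to all of $Z$ via a cutoff (Remark~\ref{remphi}), it computes the critical groups $c^*(\varphi_\lambda,K(\varphi_\lambda))$ of the \emph{whole} critical set (bounded under the contradiction hypothesis) using Gromoll--Meyer admissible pairs $(\bbw,\bbw^-)$ as in Lemma~\ref{summary}. The (PS) hypothesis on $\varphi_{\lambda_0}$ is exactly what makes the homotopy invariance in $\lambda$ (Lemma~\ref{summary}(iii)) go through across the degenerate value. At $\lambda=\pm\delta$ one builds explicit box pairs $\bbw=\{\|z^+\|\le R_0,\ \|z^-\|\le R_0\}$ using only $\nabla\varphi_{\pm\delta}(z)=L_{\pm\delta}z+o(\|z\|)$; these pairs are simultaneously admissible for the pure quadratics $\Psi_{\pm\delta}(z)=\tfrac12\la L_{\pm\delta}z,z\ra$, giving $c^q(\varphi_{\pm\delta},K(\varphi_{\pm\delta}))=\delta_{q,m_\pm}\z_2$ with $m_+\ne m_-$. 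Thus no local analysis at a degenerate point is needed, and the Morse-lemma/shifting-theorem issues you flag simply do not arise.
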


Note that here we do not assume $\lambda_0$ is of odd multiplicity. In Theorem \ref{thm2a} below we shall give sufficient conditions for $f$ in order that such $\lambda_0$ be an asymptotic bifurcation point for \eqref{se}.

To formulate our results for equation \eqref{se} we introduce the following assumptions on $f$:  
\begin{itemize}
\item[$(f_1)$] $f:\R^N\times\R\to\R$ satisfies the Carath\'eodory condition, i.e., it is continuous in $s$ for almost all $x\in\rn$ and measurable in $x$ for all $s\in\r$, and there exist $\alpha\in L^2(\R^N)$, $\beta\in\r^+$ such that
$|f(x,s)|\leq\alpha(x)+\beta|s|$ for all $x\in\R^N$, $s\in\R$;
\item[$(f_2)$] $f$ is Lipschitz continuous in the second variable, with Lipschitz constant $\Lip(f) := \inf \{C: |f(x,s)-f(x,t)| \le C|s-t| \text{ for all } x\in\rn,\ s,t\in\r\}$; 
\item[$(f_3)$] $\lim_{|s|\to\infty}f(x,s)/s = m(x)$, where $m\in L^\infty(\rn)$;
\item[$(f_4)$] $g(x,s):=f(x,s)-m(x)s$ is bounded by a constant independent of $x\in\R^N$ and $s\in\R$;
\item[$(f_5)$] Assume the limits $g_\pm(x):=\lim_{s\to\pm\infty}g(x,s)$ exist and either $\pm g_\pm\ge 0$  a.e. or  $\pm g_\pm\le 0$ a.e. In addition, there exists a set of positive measure on which none of $g_\pm$ vanishes;
\item[$(f_6)$] Assume the limits $h_\pm(x):=$ $\lim_{s\to\pm\infty}g(x,s)s$ exist, $h_\pm\in L^\infty(\rn)$ and either $g(x,s)s\ge 0$ or $g(x,s)s\le 0$ for all $x\in\rn$, $s\in\r$. In addition, there exists a set of positive measure on which none of $h_\pm$ vanishes.
\end{itemize}

Note that if $f(x,s) = \alpha(x)+f_0(s)$ and $|f_0(s)| \le \beta|s|$, where $\alpha\in L^2(\rn)$, $\beta>0$ and $f_0$ is continuous, then $f$ satisfies $(f_1)$. As we have already mentioned, such functions $f$ have been considered in \cite{st3}. 

\begin{theorem} \label{thm2}
Suppose that $V\in L^\infty(\rn)$ and $f$ satisfies $(f_1)$-$(f_3)$. Let $g(x,s):=f(x,s)-m(x)s$. If $\lambda_0$ is an isolated eigenvalue of odd multiplicity for $-\Delta+V-m$ and $\Lip(g) < \dist(\lambda_0, \sigma_e(-\Delta+V-m))$, then $\lambda_0$ is an asymptotic bifurcation point for equation \eqref{se}. Moreover, there exists a continuum bifurcating from infinity at $\lambda_0$.
\end{theorem}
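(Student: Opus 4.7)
The plan is to recast \eqref{se} as the abstract equation \eqref{oe} and apply Theorem \ref{thm1}. Take $E := L^2(\rn)$, $L := -\Delta + V$ with $D(L) := H^2(\rn)$, which is selfadjoint since $V \in L^\infty(\rn)$. Let $N: L^2(\rn) \to L^2(\rn)$ be the Nemytskii operator $N(u)(x) := f(x, u(x))$, which is well defined and continuous by $(f_1)$. The candidate asymptotic derivative is $N'(\infty) := M_m$, multiplication by $m$; it is bounded and selfadjoint on $L^2(\rn)$ since $m \in L^\infty(\rn)$ is real-valued. Then $L - N'(\infty) = -\Delta + V - m$, which matches the operator in the hypothesis.

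The substantive step is to verify that $N$ is $H$-asymptotically linear with asymptotic derivative $M_m$. Following the strategy of Stuart \cite{st1}, it suffices to show that for any $t_n \to \infty$ in $\r^+$ and any $v_n \to v$ strongly in $L^2(\rn)$, we have $N(t_n v_n)/t_n \to M_m v$ in $L^2(\rn)$. Writing $u_n := t_n v_n$, the difference equals $g(\cdot, u_n)/t_n$. Passing to a subsequence with $v_n(x) \to v(x)$ a.e., split $\rn$ into $\{v \ne 0\}$ and $\{v = 0\}$: on the first set $|u_n(x)| \to \infty$ a.e., so $(f_3)$ gives $g(x, u_n)/u_n \to 0$ and hence $g(x, u_n)/t_n = (g(x, u_n)/u_n)\,v_n(x) \to 0$ pointwise; on the second set $v_n(x) \to 0$ and the estimate $|g(x, u_n)|/t_n \le \alpha(x)/t_n + (\beta + \|m\|_\infty)|v_n(x)|$ forces the pointwise limit to be $0$. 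The squared version of the same bound, together with the $L^1$-convergence $|v_n|^2 \to |v|^2$, supplies the equi-integrability required by Vitali's convergence theorem, yielding convergence in $L^2(\rn)$. This is the $x$-dependent extension of \cite{st1} flagged in the introduction.

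It remains to verify $\Lip_\infty(N - M_m) \le \Lip(g)$. Since $N - M_m$ is the Nemytskii operator of $g$, and $(f_2)$ combined with the boundedness of $m$ gives $|g(x,s) - g(x,t)| \le \Lip(g)\,|s - t|$ pointwise, integration yields a global Lipschitz bound on $L^2(\rn)$ with constant $\Lip(g)$, which bounds the asymptotic Lipschitz constant a fortiori. Combined with the odd-multiplicity assumption on $\lambda_0$ and the gap hypothesis $\Lip(g) < \dist(\lambda_0, \sigma_e(-\Delta + V - m))$, all hypotheses of Theorem \ref{thm1} are met, and its conclusion — the existence of a continuum bifurcating from infinity at $\lambda_0$ — is precisely what is claimed for \eqref{se}. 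The single delicate item I foresee is the equi-integrability verification underlying $H$-asymptotic linearity, in particular handling the set $\{v = 0\}$ where $u_n(x)$ need not tend to infinity and one must rely on $(f_1)$-growth rather than on $(f_3)$; the remainder is a direct translation of hypotheses.
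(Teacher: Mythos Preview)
Your proposal is correct and follows essentially the same route as the paper. The only cosmetic differences are that the paper absorbs $m$ into the linear part (writing $L=-\Delta+V-m$ and taking $N$ to be the Nemytskii operator of $g$, so that $N'(\infty)=0$) rather than leaving it as $N'(\infty)=M_m$, and invokes dominated convergence instead of Vitali for the $H$-asymptotic linearity step; your explicit split into $\{v\ne 0\}$ and $\{v=0\}$ makes transparent a point the paper leaves implicit.
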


This strengthens some of the results of \cite[Theorem 5.2]{st3}. Using examples in \cite[Theorems 5.4, 5.6]{st3} and the remarks following them we shall show in Remark \ref{nobif} that the condition on $\Lip(g)$ above is sharp in the sense that if $\Lip(g) > \dist(\lambda_0, \sigma_e(-\Delta+V-m))$, then there may be no bifurcation at a simple eigenvalue.

\begin{theorem} \label{thm2a}
Suppose that $V\in L^\infty(\rn)$ and $f$ satisfies $(f_1)$-$(f_4)$ and either $(f_5)$ or $(f_6)$.  If $\lambda_0$ is an isolated eigenvalue of finite multiplicity for $-\Delta+V-m$ and $\Lip(g) < \dist(\lambda_0, \sigma_e(-\Delta+V-m))$,
then $\lambda_0$ is an asymptotic bifurcation point for equation \eqref{se}. 
\end{theorem}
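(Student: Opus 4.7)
The plan is to deduce Theorem~\ref{thm2a} from the abstract variational result Theorem~\ref{thm1a} by taking $E=L^2(\rn)$, $L=-\Delta+V$ with $D(L)=H^2(\rn)$, and $N\colon L^2(\rn)\to L^2(\rn)$ the Nemytskii operator $N(u)(x)=f(x,u(x))$. Hypotheses (i)--(iii) of Theorem~\ref{thm1a} are verified essentially as in the proof of Theorem~\ref{thm2}: $\psi(u):=\int_{\rn}F(x,u)\,dx$ with $F(x,s):=\int_0^s f(x,t)\,dt$ is $C^1$ with $\nabla\psi=N$ by $(f_1)$; $N$ is $H$-asymptotically linear with $N'(\infty)u=m u$ (self-adjoint, being multiplication by $m\in L^\infty$) by $(f_3)$; and $\Lip_\infty(N-N'(\infty))\le\Lip(g)$ by $(f_2)$, matching the distance assumption. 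Everything therefore reduces to verifying the Palais-Smale condition for $\vp_{\lambda_0}$.

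Since $Z:=\ker(\cl-\lambda_0 I)$ is finite-dimensional, (PS) follows once every sequence $(z_n)\subset Z$ with $\|z_n\|\ge R$, $\vp_{\lambda_0}(z_n)$ bounded and $\vp'_{\lambda_0}(z_n)\to 0$ is shown to be bounded. Assume by contradiction $\|z_n\|\to\infty$. Set $w_n:=w(\lambda_0,z_n)$ and $u_n:=z_n+w_n$; by Proposition~\ref{reduction}, $\|w_n\|$ stays bounded. Extracting a subsequence, $\hat z_n:=z_n/\|z_n\|\to \hat z$ in $Z$ with $\|\hat z\|=1$, so $u_n/\|z_n\|\to \hat z$ in $L^2(\rn)$ and a.e. A direct computation via Proposition~\ref{varred} gives
\[
\vp_{\lambda_0}'(z) = -(I-P)\tilde N\bigl(z+w(\lambda_0,z)\bigr), \qquad \tilde N(u)(x)=g(x,u(x)),
\]
hence $\int_{\rn}g(x,u_n)\zeta\,dx\to 0$ for every $\zeta\in Z$. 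Moreover, the identity $\vp_{\lambda_0}(z_n)=\tfrac12\langle(\cl-\lambda_0)w_n,w_n\rangle-\int_{\rn}G(x,u_n)\,dx$, with $G(x,s):=\int_0^s g(x,t)\,dt$, together with boundedness of $\vp_{\lambda_0}(z_n)$, forces $\int_{\rn}G(x,u_n)\,dx$ to remain bounded.

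On $\{\hat z>0\}$ we have $u_n(x)\to+\infty$ pointwise, so $g(x,u_n(x))\to g_+(x)$ by $(f_3)$ and $(f_5)$; analogously $g(x,u_n(x))\to g_-(x)$ on $\{\hat z<0\}$. Since $\lambda_0$ is isolated and $V,m\in L^\infty$, eigenfunctions in $Z$ decay (by Agmon-type estimates), so $Z\subset L^1(\rn)$. Using $(f_4)$'s bound $|g|\le M$ with dominating function $M|\hat z|\in L^1(\rn)$, dominated convergence applied to $\zeta=\hat z$ yields
\[
0=\lim_{n\to\infty}\int_{\rn}g(x,u_n)\hat z\,dx = \int_{\{\hat z>0\}}g_+\hat z\,dx+\int_{\{\hat z<0\}}g_-\hat z\,dx.
\]
The sign hypothesis of $(f_5)$ makes both terms non-negative (or both non-positive), so each integrand vanishes a.e., forcing $g_+\equiv 0$ on $\{\hat z>0\}$ and $g_-\equiv 0$ on $\{\hat z<0\}$. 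The strong unique continuation principle for $(\cl-\lambda_0)\hat z=0$ shows $\{\hat z=0\}$ has measure zero, so the positive-measure set of $(f_5)$ on which both $g_+$ and $g_-$ are non-vanishing must be contained in $\{\hat z=0\}$—a contradiction. Under $(f_6)$, one instead invokes Fatou on $\int_{\rn}G(x,u_n)\,dx$: the sign condition ensures $G\ge 0$, and on the positive-measure set where both $h_\pm$ do not vanish one has $G(x,u_n(x))\to+\infty$ (since $G(x,s)\sim h_\pm(x)\ln|s|$), contradicting boundedness.

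The main obstacle is the limit passage under the integral, where the pointwise a.e.\ convergence of $g(x,u_n)$ has to be promoted to convergence of the integrals against elements of $Z$. The absence of a global $L^1$-bound on $g(\cdot,u_n)$ makes one rely on the $L^1$-decay of eigenfunctions in $Z$, while the unique continuation principle ensures $\{\hat z=0\}$ is negligible, so that the sign hypotheses $(f_5)$ or $(f_6)$ can actually produce a contradiction. These two ingredients, together with the trivial (in finite dimension) passage from boundedness of a (PS) sequence to convergence, are exactly what make the technical conditions $(f_4)$--$(f_6)$ indispensable.
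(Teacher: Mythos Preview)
There are two genuine gaps.

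\textbf{First gap (affects both cases).} You set $Z=\ker(\cl-\lambda_0 I)$, but the space $Z$ produced by the Liapunov--Schmidt reduction of Proposition~\ref{reduction} is in general strictly larger: it is spanned by the eigenfunctions for \emph{all} eigenvalues of $\cl$ in an interval $[\lambda_0-d,\lambda_0+d]$ with $d>\Lip_\infty(N)$ (see the proof of Theorem~\ref{gamma}). One can take $Z=\ker(\cl-\lambda_0 I)$ only under the stronger hypothesis $\Lip(g)<\dist(\lambda_0,\sigma(\cl)\setminus\{\lambda_0\})$, which is not assumed. Consequently your formula $\vp_{\lambda_0}'(z)=-(I-P)\tilde N(u)$ is wrong: by Proposition~\ref{varred} the correct expression is $\nabla\vp_{\lambda_0}(z)=(\cl-\lambda_0)z-(I-P)\tilde N(u)$, and the term $(\cl-\lambda_0)z$ does not vanish on $Z^\pm$. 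Hence the claim ``$\int g(x,u_n)\zeta\,dx\to 0$ for every $\zeta\in Z$'' fails. The paper fixes this by first testing $\nabla\vp_{\lambda_0}(z_n)$ against $z\in Z^\pm$ and using $(f_4)$ and coercivity of $(\cl-\lambda_0)|_{Z^\pm}$ to bound $\|z_n^\pm\|$; only then does the normalized limit $\hat z$ lie in $Z^0=\ker(\cl-\lambda_0)$, the linear term drops out, and unique continuation applies. (Your assertion ``$\|w_n\|$ stays bounded'' is also not given by Proposition~\ref{reduction}, which only yields $w_n/\|z_n\|\to 0$; the paper invokes the separate Lemma~\ref{bound} for an $L^\infty$ bound, though with the correction above your a.e.\ argument via $u_n/\|z_n\|\to\hat z$ would in fact avoid this.)

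\textbf{Second gap (the $(f_6)$ case).} Your approach here is to verify (PS) via the boundedness of $\int G(x,u_n)\,dx$, but this route appears to fail. Even after correcting the identity to
\[
\vp_{\lambda_0}(z_n)=\tfrac12\langle(\cl-\lambda_0)z_n,z_n\rangle+\tfrac12\langle(\cl-\lambda_0)w_n,w_n\rangle-\int_{\rn}G(x,u_n)\,dx
\]
and bounding $z_n^\pm$, you still need $\langle(\cl-\lambda_0)w_n,w_n\rangle$ bounded. Since $(\cl-\lambda_0)w_n=P\tilde N(u_n)$ and the only available $L^2$ estimate is $\|g(\cdot,u_n)\|_2\lesssim\|\alpha\|_2+\|u_n\|_2$, this quantity can grow like $\|u_n\|_2^2$. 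The paper explicitly states that under $(f_6)$ it is \emph{not known} whether (PS) holds for $\vp_{\lambda_0}$, and instead bypasses Theorem~\ref{thm1a} by constructing an admissible Gromoll--Meyer pair $(\bbw,\bbw^-)$ directly: using $(f_6)$ to show that $\langle\nabla\vp_0(z),z^0\rangle$ has a definite sign on $\{\|z^0\|=R_1\}$, and then comparing critical groups for $\vp_{\pm\delta}$ as in the proof of Theorem~\ref{thm1a}.
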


To our knowledge there are no earlier results on asymptotic bifurcation for \eqref{se} if the multiplicity of $\lambda_0$ is even.

\medskip

The rest of the paper is organized as follows. Section \ref{sec:prel} contains some preliminary material. In Section \ref{sec:red} a finite-dimensional reduction is performed. In Section \ref{sec:proof1} we prove Theorems \ref{thm1} and \ref{thm1a}, and Section \ref{sec:proof2} is concerned with the proofs of Theorems \ref{thm2} and \ref{thm2a}.  

\medskip

\noindent\textbf{Notation.} $\la\cdot\,,\cdot\ra$ denotes the inner product in a (real) Hilbert space $E$ and $\|\cdot\|$ is the corresponding norm. If $\Phi\in C^1(E,\r)$, then $\Phi'(u)\in E^*$ is the Fr\'echet derivative of $\Phi$ at $u$ and $\nabla\Phi(u)$ (the gradient of $\Phi$ at $u$) is the corresponding element in $E$, i.e., $\la\nabla\Phi(u),v\ra = \Phi'(u)v$. The graph norm corresponding to a linear operator $L$ will be denoted by $\|\cdot\|_L$. The symbol $B_r(a)$ will stand for the open ball centered at $a$ and having radius $r$, and we denote the $L^p$-norm of $u$ by $\|u\|_p$.

\section{Preliminaries} \label{sec:prel}

Let $X, Y$ be (real) Banach spaces and let $N:X\setminus B_R(0)\to Y$.

\begin{definition}\label{defi1}
{\em (i) We say that $N$ is \emph{asymptotically linear in the sense of Hadamard} ({\em  H-asympto\-tically linear} for short) if there is a bounded linear operator $B:X\to Y$ such that
$$\lim_{n\to\infty}\frac{N(t_nu_n)}{t_n}=Bu$$
for all sequences $(t_n)\subset\R$, $(u_n)\subset X$ such that  $u_n\to u$ and $\|t_nu_n\|\to\infty$. The operator $B$ is called the \emph{asymptotic $H$-derivative }and is denoted by $N'(\infty)$.\\
\indent (ii) We say that $N$ is \emph{Lipschitz continuous at infinity} if
$$\Lip_\infty(N):=\lim_{R\to\infty}\sup\left\{\frac{\|N(u)-N(v)\|}{\|u-v\|}: u\neq v,\;\|u\|,\|v\|\geq R\right\}<\infty.$$
Note that the limit is well defined because the supremum above decreases as $R$ increases.}\end{definition}

\begin{remark}\label{as lin} 
{\em (i) The definition of $H$-asymptotic linearity given in \cite{st1} is in fact a little different but the one formulated above is somewhat more convenient and is equivalent to the original one as has been shown in \cite[Theorem A.1]{st1}.\\
\indent (ii) Recall that $N$ is {\em asymptotically linear} (in the sense of Fr\'echet) if there is a bounded linear operator $B$ such that
\begin{equation} \label{*}
\lim_{\|u\|\to \infty}\frac{\|N(u)-Bu\|}{\|u\|}=0.
\end{equation}
It is clear that if $N$ is asymptotically linear, then it is $H$-asymptotically linear and $N'(\infty)=B$. If, however, $\dim X<\infty$, then $H$-asymptotic linearity is equivalent to asymptotic linearity and \eqref{*} above holds for $B=N'(\infty)$, see \cite[Remark 2]{st1}.
}\end{remark}

Recall that a linear operator
$L :D(L)\subset X\to Y$ is called {\em a Fredholm operator} if it is densely defined, closed, $\dim N(L)<\infty$ (where $N(L)$ is the kernel of $L$),
the range $R(L)$ is closed and $\codim R(L)<\infty$. The number
$$ \ind(L):=\dim N(L)-\codim R(L)$$
is the {\em index of $L$} (cf.\ \cite[Section 1.3]{MS}).

Suppose that  $E$ is a real Hilbert space and let $L:D(L)\subset E\to E$ be a selfadjoint Fredholm operator. Then $\ind(L)=0$, $E=N(L)\oplus R(L)$ (orthogonal sum) and $S:=L|_{R(L)\cap D(L)}$ is invertible with bounded inverse. Hence, in view of \cite[Problem III.6.16]{Kato},
$$\|S^{-1}\|=r(S^{-1})=\frac{1}{\dist(0,\sigma(S))}=\frac{1}{\dist(0,\sigma(L)\setminus\{0\})},$$ where $r(S^{-1})$ denotes the spectral radius of
$S^{-1}$. The first equality holds since $S^{-1}$ is selfadjoint, see \cite[(V.2.4)]{Kato}. Recall that a selfadjoint operator is necessarily densely defined and closed.

It is clear that if $W$ is a closed subspace of
$R(L)$, invariant with respect to $L$ (i.e. $L(W\cap D(L))\subset
W$), then $L_W:=L|_{W\cap D(L)}$ is also invertible and
$$\|L_W^{-1}\|=\frac{1}{\dist(0,\sigma(L_W))}.$$

\begin{remark}\label{graph norm} {\em Keeping the above notation observe that $L_W^{-1}:W\to W\cap D(L)$ is bounded with respect to the graph norm $\|\cdot\|_L$ in $W\cap D(L)$ (recall that $\| u\|_L:=\|u\|+\|Lu\|$ for $u\in D(L)$). In fact,
$$\| L^{-1}_Ww\|_L=\|L_W^{-1}w\|+\|w\|\leq \left(1+\frac{1}{\dist(0,\sigma(L_W))}\right)\|w\|,\;\; w\in W.$$
}\end{remark}

\begin{definition}\label{defi gamma} {\em For a selfadjoint Fredholm operator $L:D(L)\to E$, let us put
\be\label{def gamma}\gamma(L):=\inf\{\|(L|_{W\cap D(L)})^{-1}\|:W\in {\cal W}\},\ee
where ${\cal W}$ denotes the family of closed $L$-invariant  linear
subspaces of $R(L)$ such that $\codim W<\infty$ and $W^\bot\subset
D(L)$.}
\end{definition}

\begin{definition}\label{defi2} {\em By the {\em essential spectrum} $\sigma_e(L)$ of a selfadjoint linear operator $L:E\supset D(L)\to E$ we understand the set
\[
\{\lambda\in \mathbb{C}: L-\lambda I \text{ is not a Fredholm operator}\}
\]
(see \cite[\S 1.4]{MS}).
}
\end{definition}

It follows immediately from this definition that $\sigma_e(L)\subset\sigma(L)$ and $\sigma(L)\setminus\sigma_e(L)$ consists of isolated eigenvalues of finite multiplicity.

\begin{theorem}\label{gamma} Let $L:E\supset D(L)\to E$ be a selfadjoint linear
operator and let $\lambda_0\in\sigma(L)\setminus \sigma_e(L)$.
Then $L-\lambda_0 I$ is a Fredholm operator and
$$\gamma(L-\lambda_0 I)=\frac{1}{\dist(\lambda_0,\sigma_e(L))}.$$
If $\sigma_e(L)=\emptyset$ (this is the case e.g. if $L$ is
resolvent compact), then 
$\gamma(L-\lambda_0 I)=0$.
\end{theorem}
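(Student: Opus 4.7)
The plan is to apply the spectral theorem to the selfadjoint operator $L$ and let $E(\cdot)$ denote its resolution of the identity. By Definition \ref{defi2} the operator $L-\lambda_0 I$ is Fredholm and selfadjoint, hence of index zero. Write $d:=\dist(\lambda_0,\sigma_e(L))$. I would prove $\gamma(L-\lambda_0 I)=1/d$ by establishing matching upper and lower bounds.

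\textbf{Upper bound.} I would fix $0<\eps<d$, set $I_\eps:=(\lambda_0-d+\eps,\lambda_0+d-\eps)$, and consider $F_\eps:=E(I_\eps)E$. The closed interval $\overline{I_\eps}$ lies at distance $\eps$ from $\sigma_e(L)$, so $\sigma(L)\cap\overline{I_\eps}$ has no accumulation points (they would belong to $\sigma_e(L)$) and therefore consists of finitely many eigenvalues of finite multiplicity. Hence $F_\eps$ is finite-dimensional, contained in $D(L)$, $L$-invariant, and contains $\ker(L-\lambda_0 I)$. Setting $W_\eps:=F_\eps^\bot$ one verifies $W_\eps\in\mathcal{W}$; by the spectral theorem, $\sigma((L-\lambda_0 I)|_{W_\eps\cap D(L)})\subset(\sigma(L)\setminus I_\eps)-\lambda_0$, which lies at distance $\ge d-\eps$ from $0$. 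Thus $\|((L-\lambda_0 I)|_{W_\eps\cap D(L)})^{-1}\|\le 1/(d-\eps)$, and letting $\eps\downarrow 0$ yields $\gamma(L-\lambda_0 I)\le 1/d$.

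\textbf{Lower bound.} Assuming $\sigma_e(L)\ne\emptyset$, I would pick $\mu\in\sigma_e(L)$ with $|\mu-\lambda_0|=d$ (attained since $\sigma_e(L)$ is closed) and invoke Weyl's criterion to obtain a singular sequence $(u_n)\subset D(L)$ with $\|u_n\|=1$, $u_n\rh 0$, and $(L-\mu I)u_n\to 0$. The crux of the argument is that an arbitrary $W\in\mathcal{W}$ reduces $L$, so that the orthogonal projection $P_W$ commutes with $L$ on $D(L)$; this is the step I expect to be the main obstacle, and it requires showing $W^\bot$ is also $L$-invariant, which I would deduce from the $L$-invariance of $W$ together with the density of $W\cap D(L)$ in $W$ (itself a consequence of $W^\bot\subset D(L)$ and the density of $D(L)$ in $E$). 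Once commutation is available, $v_n:=P_W u_n\in W\cap D(L)$ satisfies $\|v_n\|\to 1$ (because $\dim W^\bot<\infty$ forces $P_{W^\bot}u_n\to 0$) and $(L-\mu I)v_n=P_W(L-\mu I)u_n\to 0$. Writing $(L-\lambda_0 I)v_n=(L-\mu I)v_n+(\mu-\lambda_0)v_n$ then forces $\|(L-\lambda_0 I)v_n\|/\|v_n\|\to d$, whence $\|((L-\lambda_0 I)|_{W\cap D(L)})^{-1}\|\ge 1/d$; the infimum over $W\in\mathcal{W}$ gives $\gamma(L-\lambda_0 I)\ge 1/d$.

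\textbf{The case $\sigma_e(L)=\emptyset$.} For each $R>0$, the finiteness argument from the upper bound step shows that $\sigma(L)\cap[\lambda_0-R,\lambda_0+R]$ is finite, so $W_R:=E((\lambda_0-R,\lambda_0+R))^\bot\in\mathcal{W}$ and $\|((L-\lambda_0 I)|_{W_R\cap D(L)})^{-1}\|\le 1/R$; letting $R\to\infty$ yields $\gamma(L-\lambda_0 I)=0$.
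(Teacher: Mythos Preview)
Your proof is correct. The upper bound and the case $\sigma_e(L)=\emptyset$ match the paper's argument almost exactly: both construct a member of $\mathcal{W}$ as the orthogonal complement of the spectral subspace corresponding to a small interval around $\lambda_0$. The lower bound, however, is obtained by a genuinely different route. The paper argues purely spectrally: for any $W\in\mathcal{W}$ with $Z:=W^\bot$, one has $\sigma(L)=\sigma(L|_Z)\cup\sigma(L|_{W\cap D(L)})$, and since $\sigma(L|_Z)$ consists of finitely many eigenvalues of finite multiplicity, necessarily $\sigma_e(L)\subset\sigma(L|_{W\cap D(L)})$; the inequality $\|((L-\lambda_0 I)|_{W\cap D(L)})^{-1}\|\ge 1/\dist(\lambda_0,\sigma_e(L))$ then drops out of the identity $\|(L_W)^{-1}\|=1/\dist(0,\sigma(L_W))$. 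Your Weyl-sequence argument instead exhibits an explicit almost-minimising sequence inside $W$ by projecting a singular sequence for some $\mu\in\sigma_e(L)$ realising the distance. The paper's route is shorter and more conceptual, but it tacitly uses that $Z=W^\bot$ is $L$-invariant (equivalently, that $W$ reduces $L$) --- precisely the point you flag as the main obstacle and justify carefully via the density of $W\cap D(L)$ in $W$. So your version is a bit longer but more self-contained on that technical issue; the paper's version makes the structural reason for the bound (finite-codimensional restriction cannot remove essential spectrum) more transparent.
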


\begin{proof} Since
$\sigma_e(L)-\lambda_0=\sigma_e(L-\lambda_0 I)$
and hence
$$\dist(\lambda_0,\sigma_e(L))=\dist(0,\sigma_e(L-\lambda_0I)),$$
we may assume without loss of generality that $\lambda_0=0$ and we will show that
$$\gamma(L)=\frac{1}{\dist(0,\sigma_e(L))}.$$
If $W\in {\cal W}$ and $Z:=W^\bot$, then $\dim Z<\infty$ and $Z\subset D(L)$ is $L$-invariant. Hence $\sigma(L)=\sigma(L|_{W\cap D(L)})\cup \sigma(L|_Z)$. Obviously, any $\lambda\in \sigma(L|_Z)$ is an isolated eigenvalue of  finite multiplicity; thus $\sigma_e(L)\subset \sigma(L|_{W\cap D(L)})$. This implies that
$$\|(L|_{W\cap D(L)})^{-1}\|=\frac{1}{\dist(0,\sigma(L|_{W\cap D(L)})}\geq\frac{1}{\dist(0,\sigma_e(L))}\text{ and therefore }\gamma(L)\geq \frac{1}{\dist(0,\sigma_e(L))}.$$

Take any $0<d<\dist(0,\sigma_e(L))$
and let
$$D=[-d,d]\cap \sigma(L),\;\;B:=\sigma(L)\setminus D.$$
Clearly $D$ is finite:  if $\lambda\in D$, then
$\lambda\in\sigma(L)\setminus \sigma_e(L)$, i.e., $\lambda$ is an
isolated eigenvalue of finite multiplicity. Therefore $B$ is closed and $\sigma_e(L)\subset B$. Obviously, $\sigma(L)=D\cup B$.
Let $Z$ be the subspace spanned by the eigenfunctions corresponding to the eigenvalues in $D$ and let $W=Z^\bot$. Then $Z\subset D(L)$, $W\subset R(L)$,  $Z,W$ are invariant with respect to $L$, $L|_{Z}$ is bounded,
$D=\sigma(L|_Z)$ and $B=\sigma(L|_{W\cap D(L)})$. Clearly, $W\in {\cal W}$ since $\dim Z<\infty$. Now
$$\|(L|_{W\cap D(L)})^{-1}\|=r((L|_{W\cap
D(L)})^{-1})=\frac{1}{\dist(0,\sigma(L|_{W\cap
D(L)}))}=\frac{1}{\dist(0,B)}\leq \frac{1}{d}.$$ This implies the
assertion. Note that if $\sigma_e(L)=\emptyset$, we can choose any $d>0$. Hence $\gamma(L)=0$.
\end{proof}

\begin{remark}\label{rem gamma} 
{\em 
Let $L$ be a Fredholm operator of index 0 and let ${\cal P}(L)$ denote the
collection of all bounded operators $K$ of finite rank and such that $L+K$ is invertible. Clearly,
${\cal P}(L)\neq\emptyset$. Put
$$\wt\gamma(L):=\inf\{\|(L+K)^{-1}\|: K\in {\cal P}(L)\}.$$
Then $\wt\gamma(L)$ corresponds to the notion of {\em essential conditioning
number} in \cite[Section 5.1]{st2}, see also \cite[Section 3.1]{st3} where the definition above appears explicitly.

We claim that if $L$ is a selfadjoint Fredholm operator,  then $\wt\gamma(L) = \gamma(L)$. For $K\in {\cal P}(L)$, $\sigma_e(L)=\sigma_e(L+K)\subset\sigma(L+K)$, hence
$$
\|(L+K)^{-1}\|\geq r((L+K)^{-1})=\frac{1}{\dist(0,\sigma(L+K))} \ge \frac{1}{\dist(0,\sigma_e(L))}.
$$
So $\wt\gamma(L)\ge \gamma(L)$ according to the definition of $\wt\gamma$ and Theorem \ref{gamma}. 
On the other hand, take any $W\in {\cal W}$ and let $Z:=W^\bot$. As before, write $u=z+w\in Z\oplus W$ and let $Ku:=\alpha z-Lz$,
where
$$\alpha:=\inf\{\|Lw\|:w\in W\cap D(L),\,\|w\|=1\} > 0.$$ 
Then $K$ has finite rank and, for $u\in
D(L)$,   $Lu+Ku=Lw+\alpha z$. Hence $L+K$ is invertible and it is easy to see
that
$$ \inf\{\|Lu+Ku\|: u\in D(L),\, \|u\|=1\}\geq \alpha.$$
So
$$
\wt\gamma(L) \le\|(L+K)^{-1}\| \leq \frac{1}{\alpha}=\|(L|_{W\cap
D(L)})^{-1}\|
$$
and $\wt\gamma(L)\leq\gamma(L)$.
We have shown that $\wt\gamma(L) = \gamma(L)$. Therefore Theorem \ref{gamma} may be considered as a refinement of
\cite[Theorem 5.5 and Corollary 5.6]{st2}.
}
\end{remark}

\section{The problem and finite-dimensional reduction} \label{sec:red}

Let $E$ be a real Hilbert space and $L:E\supset D(L)\to E$ a selfadjoint operator.
 We shall study the existence of solutions to the eigenvalue problem \eqref{oe}, i.e.,
\[
Lu=\lambda u+N(u), \quad u\in D(L),\ \lambda\in\R,
\]
or, more precisely, the existence of asymptotic bifurcation of solutions to (\ref{oe}). Recall that $\lambda_0\in\R$ is an {\em asymptotic bifurcation point} for (\ref{oe}) if there exist sequences $\lambda_n\to\lambda_0$ and $(u_n)\subset D(L)$ such that $\|u_n\|\to\infty$ and $Lu_n-N(u_n)=\lambda_nu_n$.

By $X$ we denote the domain $D(L)$ furnished with the graph norm
$$\| u\|_L:=\|u\|+\|Lu\|,\ u\in D(L).$$
Then $X$ is a Banach space, $L$ is bounded as an operator from $X$ to $E$ and the inclusion $i:X\hookrightarrow E$ is continuous.

If $N$ is a potential operator, i.e. there exists $\psi\in C^1(E,\R)$ such that $N=\nabla\psi$, then
along with \eqref{oe} we can consider the existence of critical points of the functional $\Phi_\lambda: X\to\r$, $\lambda\in\R$, given by
$$\Phi_\lambda(u):=\frac{1}{2}\la Lu-\lambda u,u\ra-\psi(u),\quad u\in X.$$
Since $|\la Lu,u\ra|\leq\|Lu\|\|u\|\leq \| u\|_L^2$, $\Phi_\lambda\in C^1(X,\R)$ and
\be\label{wzor pochodnej}\Phi_\lambda'(u)v=\la Lu-\lambda u,v\ra-\la N(u),v\ra,\quad u,v\in X.\ee
It is clear that if $u\in X$  solves (\ref{oe}) for some $\lambda\in\R$, then  $\Phi_\lambda'(u)v=0$ for all $v\in X$, i.e., $u$ is a critical point of $\Phi_\lambda$. Conversely, if $u\in X$ and $\Phi_\lambda'(u)=0$, then $u$ solves (\ref{oe}) since $D(L)$ is dense in $E$. Note that if $L$ is unbounded, then $\Phi_\lambda$ is defined on $D(L)$ and is not $C^1$ with respect to the original norm $\|\cdot\|$ of $E$ on $D(L)$. 

\medskip

In what follows we assume:
\begin{ass}\label{as1}{\em $N$ is $H$-asymptotically linear with $N'(\infty)= 0$;}\end{ass}
\begin{ass}\label{as2} {\em $N$ is Lipschitz continuous at infinity;}\end{ass}
\begin{ass}\label{as3} {\em $\lambda_0=0\in \sigma(L)\setminus \sigma_e(L)$ and $\Lip_\infty(N)<\dist(0,\sigma_e(L))$.}\end{ass}
Observe that these assumptions cause no loss of generality in Theorems \ref{thm1} and \ref{thm1a} since if $N'(\infty)\ne 0$ is selfadjoint and $\lambda_0\ne 0$, then we may replace $L$ by $L-N'(\infty)-\lambda_0I$ and $N$ by $N-N'(\infty)$.

As a first step towards showing that $\lambda_0=0$ is an asymptotic bifurcation point for \eqref{oe} we
perform a kind of a Liapunov-Schmidt finite-dimensional reduction near infinity. Put
$$L_\lambda u:=Lu-\lambda u, \text{ where }u\in D(L_\lambda)=D(L),\ \lambda\in\R$$
and note that the norms $\|\cdot\|_L$ and $\|\cdot\|_{L_\lambda}$ are equivalent.  Given $W\in {\cal W}$, let $P:E\to W$ be the orthogonal projection and $Z:=W^\bot$. Observe that $u=w+z \in D(L)$, where $w\in W$, $z\in Z$, solves (\ref{oe}) if and only if
\begin{align}
\label{P}L_\lambda w & = PN(w+z),\\
\label{I-P} L_\lambda
z & = (I-P)N(w+z).
\end{align}

\begin{proposition}\label{reduction}
There are a subspace $W\in {\cal W}$, numbers $\delta\in (0,\dist(0,\sigma(L)\setminus\{0\})$, $R>0$ and a continuous map $w:[-\delta,\delta]\times (Z\setminus B_R(0))\to W\cap D(L)$ such that \eqref{P} holds for $w=w(\lambda,z)$ and: \\
{\em (i)} For any $\lambda$ with $|\lambda|\le \delta$ , $z,z'\in Z\setminus B_R(0)$ and some constant $c>0$,
\begin{equation}
\label{lip graph}\|w(\lambda,z)-w(\lambda,z')\| \le \|w(\lambda,z)-w(\lambda,z')\|_{L}\leq c\|z-z'\|.
\end{equation}
In particular, $w(\cdot,\cdot)$ is continuous with respect to the graph norm. \\
{\em (ii)}
$w(\lambda,\cdot)$ is $H$-asymptotically linear with $w'(\lambda,\infty) = 0$. \\
\emph{(iii)} $z\in Z\setminus B_R(0)$ is a solution of \eqref{I-P} with $w=w(\lambda,z)$ if and only if $u=w(\lambda,z)+z$ is a solution of \eqref{oe}.
\end{proposition}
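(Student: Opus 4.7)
The plan is to view (\ref{P}) as a fixed-point problem for $w\in W\cap D(L)$ with $(\lambda,z)$ as parameters, invert the linear part via Theorem \ref{gamma}, and apply the Banach contraction principle. By Assumption \ref{as3}, $\Lip_\infty(N)<\dist(0,\sigma_e(L))=1/\gamma(L)$, so I would first choose $\eta$ with $\Lip_\infty(N)<\eta<\dist(0,\sigma_e(L))$ and then, by the definition of $\gamma(L)$, a subspace $W\in\mathcal{W}$ satisfying $\|(L|_{W\cap D(L)})^{-1}\|<1/\eta$. Since $W$ is $L$-invariant it is also $L_\lambda$-invariant and $\sigma(L_\lambda|_{W\cap D(L)})=\sigma(L|_{W\cap D(L)})-\lambda$; choosing $\delta>0$ smaller than both $\dist(0,\sigma(L)\setminus\{0\})$ and a margin sufficient to keep $\|(L_\lambda|_{W\cap D(L)})^{-1}\|\cdot\eta<1$ uniformly for $|\lambda|\le\delta$ secures the linear bound. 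Pick $R$ so large that $\|N(u)-N(v)\|\le\eta\|u-v\|$ for all $\|u\|,\|v\|\ge R$. For $\|z\|\ge R$ and $w,w'\in W$, the orthogonality $\|w+z\|^{2}=\|w\|^{2}+\|z\|^{2}\ge R^{2}$ places both $w+z$ and $w'+z$ in the Lipschitz region, so
\[
T_{\lambda,z}(w):=(L_\lambda|_{W\cap D(L)})^{-1}PN(w+z)
\]
maps $W$ into $W\cap D(L)$ and is a contraction on $W$ with constant $q:=\|(L_\lambda|_{W\cap D(L)})^{-1}\|\,\eta<1$. Its unique fixed point defines $w(\lambda,z)$ and realizes (\ref{P}).

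For (i), the uniform contraction principle yields joint continuity of $(\lambda,z)\mapsto w(\lambda,z)$ in $E$; subtracting two fixed-point equations gives
\[
\|w(\lambda,z)-w(\lambda,z')\|\le q\,(\|w(\lambda,z)-w(\lambda,z')\|+\|z-z'\|),
\]
hence $\|w(\lambda,z)-w(\lambda,z')\|\le c_{0}\|z-z'\|$ with $c_{0}=q/(1-q)$, and the graph-norm bound (\ref{lip graph}) follows from $L(w-w')=P(N(w+z)-N(w'+z'))+\lambda(w-w')$. For (ii), since $Z$ is finite-dimensional, Remark \ref{as lin}(ii) reduces the $H$-asymptotic linearity claim to showing $\|w(\lambda,z)\|/\|z\|\to 0$ as $\|z\|\to\infty$ in $Z$; using $\|N(w+z)\|\le\|N(z)\|+\eta\|w\|$ in the fixed-point equation together with $q<1$ yields $\|w(\lambda,z)\|\le C\|N(z)\|$, and $\|N(z_n)\|/\|z_n\|\to 0$ for $z_n\in Z$ with $\|z_n\|\to\infty$ is verified by writing $z_n=\|z_n\|\hat z_n$, extracting a convergent subsequence $\hat z_n\to\hat z$ in the unit sphere of the finite-dimensional $Z$, and applying Definition \ref{defi1}(i) with $t_n=\|z_n\|$, $u_n=\hat z_n$. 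Assertion (iii) is immediate from the construction, since $w(\lambda,z)$ satisfies (\ref{P}) by definition and \eqref{oe} is equivalent to the system (\ref{P})--(\ref{I-P}).

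I expect the main difficulty to lie in calibrating $W$, $\delta$ and $R$ simultaneously so that a single contraction constant works uniformly for $(\lambda,z)\in[-\delta,\delta]\times(Z\setminus B_{R}(0))$: controlling $\|(L_\lambda|_{W\cap D(L)})^{-1}\|$ uniformly in $\lambda$ via the spectral estimate $\dist(0,\sigma(L_\lambda|_{W\cap D(L)}))\ge\dist(0,\sigma(L|_{W\cap D(L)}))-|\lambda|$ requires that the chosen $W$ strictly separate the approximation to $\gamma(L)$ from both $\Lip_\infty(N)$ and the parameter range $[-\delta,\delta]$. A secondary subtlety is the passage from the pointwise definition of $H$-asymptotic linearity, which a priori involves only strong limits along specific sequences in $E$, to the uniform rate $\|N(z)\|/\|z\|\to 0$ for $z\in Z$; here the finite-dimensionality of $Z$, together with Remark \ref{as lin}(ii), is essential.
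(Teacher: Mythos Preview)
Your proof is correct and follows essentially the same path as the paper's: choose $W$ via Theorem~\ref{gamma}, then $\delta$ and $R$ so that $T_{\lambda,z}$ is a uniform contraction on $W$, and read off (i)--(iii) from the fixed-point equation together with the estimate $\|w(\lambda,z)\|\le C\|N(z)\|$. The only difference is cosmetic: for (ii) the paper verifies the definition of $H$-asymptotic linearity directly (bounding $\|w(\lambda,t_nz_n)\|/|t_n|$ by $(1-k)^{-1}\|M_\lambda(t_nz_n)\|/|t_n|$) rather than passing through Fr\'echet asymptotic linearity via Remark~\ref{as lin}(ii), and your subsequence extraction should be paired with the standard ``every subsequence has a further subsequence with the desired limit'' remark to conclude convergence of the full sequence.
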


Note that the condition on $\delta$ implies invertibility of $L_\lambda$ for $0<|\lambda|\le\delta$. 

\begin{proof} 
(i) According to Definition \ref{defi gamma}  of $\gamma(L)$, Theorem \ref{gamma} and assumption \ref{as3}, there is a closed subspace $W\in {\cal W}$ for which
$$\Lip_\infty(N)\|(L|_{W\cap D(L)})^{-1}\|<1.$$
Hence we can find $\delta \in (0,\dist(0,\sigma(L)\setminus\{0\})$ and $R>0$ such that
$$
k:=\sup_{|\lambda|\le\delta}\|(L_\lambda|_{W\cap D(L)})^{-1}\|\cdot\beta<1,
$$ 
where
\be \label{beta}
\beta:=\sup\left\{\frac{\|N(u)-N(v)\|}{\|u-v\|}: u\neq v,\;\|u\|,\|v\|\geq R\right\}.
\ee
Let $Z := W^\bot$ and let $P: E\to W$ be the orthogonal projection. To facilitate the notation let us put
$$M_\lambda(w+z):=(L_\lambda|_{W\cap D(L)})^{-1}PN(w+z)\in W\cap D(L),\;\;w\in W,\;z\in Z\;\;\hbox{and}\;\;|\lambda|\le\delta.$$
Then \eqref{P} is equivalent to the fixed point equation
\be \label{fix}
w = M_\lambda(w+z).
\ee
Fix $\lambda\in [-\delta,\delta]$ and  $z\in Z$, $\|z\|\geq R$. If $w, w'\in W$, then $\|w+z\|,\|w'+z\|\geq \|z\|\geq R$, so taking into account that $\|P\|=1$, we have
$$\|M_\lambda(w+z)-M_\lambda(w'+z)\|\leq k\|w-w'\|.$$
By the Banach contraction principle there is a unique $w=w(\lambda,z)\in W\cap D(L)$, continuously depending on $\lambda$ and $z$, such that \eqref{fix}, and hence \eqref{P}, holds.
Moreover,
\begin{gather*}
\|w(\lambda,z)-w(\lambda,z')\|=\|M_\lambda(w(\lambda,z)+z)-M_\lambda(w(\lambda,z')+z')\|\leq\\
k\|w(\lambda,z)-w(\lambda,z')+z-z'\|\leq
k\|w(\lambda,z)-w(\lambda,z')\|+k\|z-z'\|
\end{gather*}
for all $|\lambda|\le\delta$, $z,z'\in Z\setminus B_R(0)$. So $\|w(\lambda,z)-w(\lambda,z')\| \le k(1-k)^{-1} \|z-z'\|$. Using this, \eqref{beta} and arguing as above, we obtain
\begin{gather*}
\|L_\lambda w(\lambda,z)-L_\lambda w(\lambda,z')\|= \| PN(w(\lambda,z)+z)-PN(w(\lambda,z')+z')\| \\
\le \beta\|w(\lambda,z)-w(\lambda,z')\| +\beta\|z-z'\| \leq \frac{\beta}{1-k}\|z-z'\|.
\end{gather*}
Since $\|\cdot\|_L$ and $\|\cdot\|_{L_\lambda}$ are equivalent norms, the second inequality in \eqref{lip graph} follows (the first one is obvious).

(ii) To show the $H$-asymptotic linearity of $w(\lambda,\cdot)$ with $w'(\lambda,\infty)=0$, let $(z_n)\subset Z$ and $(t_n)\subset\R$ be sequences such that $z_n\to z$ and $\|t_nz_n\|\to\infty$. Then, for sufficiently large $n$,
$\|w(\lambda,t_nz_n)+t_nz_n\|\geq \|t_nz_n\|\geq R$ and
$$
\|w(\lambda,t_nz_n)\|\leq\|M_\lambda(w(\lambda,t_nz_n)+t_nz_n)-M_\lambda(t_nz_n)\|+
\|M_\lambda(t_nz_n)\|
\leq k\|w(\lambda,t_nz_n)\|+\|M_\lambda(t_nz_n)\|.
$$   Thus, in view of assumption \ref{as1},
\be\label{w as lin}\frac{\|w(\lambda,t_nz_n)\|}{|t_n|}\leq
\frac{1}{1-k}\frac{\|M_\lambda(t_nz_n)\|}{|t_n|}\to 0.\ee 

(iii) is an immediate consequence of (i).
\end{proof}

\begin{remark}\label{pot} {\em Suppose that $z_n\to z$ in $Z$ and take a sequence $(t_n)\subset\R$ such that $\|t_nz_n\|\to\infty$. Then, again in view of the $H$-asymptotic linearity of $N$ and (\ref{w as lin}), we have
\be\label{5}
\frac{N(w(\lambda,t_nz_n)+t_nz_n)}{t_n}
=\frac{N\left(t_n\left(\frac{w(\lambda,t_nz_n)}{t_n}+z_n\right)\right)}{t_n}\to 0
\ee
for each fixed $\lambda\in[-\delta,\delta]$. }
 \end{remark}

If $N=\nabla\psi$, then we let
\be \label{redfcl}
\vp_\lambda(z):=\Phi_\lambda(w(\lambda,z)+z), \quad |\lambda|\le\delta,\ z\in Z\setminus \ol B_R(0).
\ee

\begin{proposition} \label{varred} 
Let $|\lambda|\le\delta$. Then
$\vp_\lambda\in C^1(Z\setminus \ol B_R(0),\R)$ and
\be\label{poch vp}\nabla\vp_\lambda(z)=L_\lambda z-(I-P)N(w(\lambda,z)+z).\ee
Therefore $z\in Z\setminus \ol B_R(0)$ is a critical point of $\vp_\lambda$ if and only if $u=w(\lambda,z)+z$ solves (\ref{oe}). Moreover,  $\nabla\vp_\lambda$ is asymptotically linear with $(\nabla\vp_\lambda)'(\infty)=L_\lambda|_Z$.
\end{proposition}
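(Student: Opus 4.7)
The plan is to carry out a Liapunov--Schmidt style expansion that bypasses the non-differentiability of $w(\lambda,\cdot)$, which is only known to be Lipschitz in the graph norm. Rather than applying the chain rule to $z \mapsto \Phi_\lambda(w(\lambda,z)+z)$, I would expand the increment $\vp_\lambda(z+h) - \vp_\lambda(z)$ directly, relying on equation \eqref{P} together with the orthogonality $W \perp Z$ to suppress the terms that would otherwise require $D_z w$. The main obstacle is precisely this non-differentiability, and the key is that the missing information can be absorbed into an $O(\|h\|^2)$ second-order remainder controlled by \eqref{lip graph}.

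Concretely, I would fix $z \in Z \setminus \ol B_R(0)$ and $h \in Z$ small enough that $z+h \in Z \setminus B_R(0)$, and set $w := w(\lambda,z)$, $w' := w(\lambda,z+h)$, $u := w+z$, $v := (w'-w) + h \in D(L)$. Using the selfadjointness of $L_\lambda$ and $\psi \in C^1(E,\R)$ with $\nabla\psi = N$, a direct computation yields the expansion
$$
\Phi_\lambda(u+v) - \Phi_\lambda(u) = \la L_\lambda u - N(u), v\ra + \tfrac12 \la L_\lambda v, v\ra + o(\|v\|_E).
$$
By \eqref{P}, $L_\lambda w = PN(u)$, so $L_\lambda u - N(u) = L_\lambda z - (I-P)N(u) \in Z$; pairing with $w'-w \in W$ vanishes and leaves $\la L_\lambda z - (I-P)N(u), h\ra$. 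The bound \eqref{lip graph} gives $\|v\|_L, \|v\|_E \le (c+1)\|h\|$, so $|\la L_\lambda v, v\ra| \le \|v\|_L\|v\| = O(\|h\|^2)$ and the $o(\|v\|_E)$ term is $o(\|h\|)$. This establishes Fréchet differentiability of $\vp_\lambda$ and formula \eqref{poch vp}.

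Continuity of $\nabla\vp_\lambda$ would then follow from continuity of $L_\lambda|_Z$ (a bounded operator on the finite-dimensional space $Z = W^\perp$), of $w(\lambda,\cdot)$ via Proposition \ref{reduction}(i), and of $N$. The equivalence of critical points with solutions is immediate from Proposition \ref{reduction}(iii): $\nabla\vp_\lambda(z) = 0$ in $Z$ is exactly \eqref{I-P} with $w = w(\lambda,z)$. For the asymptotic linearity I would use $\dim Z < \infty$, so that Remark \ref{as lin}(ii) reduces the claim to $H$-asymptotic linearity; then for any sequences $z_n \to z$ in $Z$ and $t_n \in \R$ with $|t_n|\|z_n\| \to \infty$, Remark \ref{pot} gives $N(w(\lambda,t_nz_n)+t_nz_n)/t_n \to 0$, whence
$$
\frac{\nabla\vp_\lambda(t_nz_n)}{t_n} = L_\lambda z_n - \frac{(I-P)N(w(\lambda,t_nz_n)+t_nz_n)}{t_n} \longrightarrow L_\lambda z,
$$
identifying $(\nabla\vp_\lambda)'(\infty) = L_\lambda|_Z$.
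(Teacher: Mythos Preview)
Your argument is correct and follows essentially the same route as the paper: both expand $\Phi_\lambda(u+\xi)-\Phi_\lambda(u)$ directly, use \eqref{P} together with the orthogonality $Z\perp W$ to kill the contribution of $w(\lambda,z+h)-w(\lambda,z)$ in the linear term, control the remainder via the graph-norm Lipschitz bound \eqref{lip graph}, and handle the asymptotic linearity exactly as you do through Remark \ref{as lin}(ii) and Remark \ref{pot}. The only cosmetic difference is that the paper packages the remainder as $\Phi_\lambda(u+\xi)-\Phi_\lambda(u)-\Phi_\lambda'(u)\xi = o(\|\xi\|_L)$ (invoking Fr\'echet differentiability of $\Phi_\lambda$ on $X$) and passes through the G\^ateaux derivative first, whereas you split off the exact quadratic term $\tfrac12\la L_\lambda v,v\ra$ and go straight to Fr\'echet differentiability.
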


\begin{proof}
To show (\ref{poch vp}) we shall compute the derivative of $\vp_\lambda$ in the direction $h\in Z$, $h\neq 0$. 
For notational convenience we write $w(z)$ for $w(\lambda,z)$. Let $t>0$,
$$u:=w(z)+z \quad \text{and} \quad \xi:=w(z+th)-w(z)+th.$$
Then we have
$$\vp_\lambda(z+th)-\vp_\lambda(z)=\Phi_\lambda(u+\xi)-\Phi_\lambda(u)-\Phi_\lambda'(u)\xi+
\Phi_\lambda'(u)\xi.$$ 
Clearly, $\xi\neq 0$ as $t>0$. In view of \eqref{wzor pochodnej},  \eqref{P} and since $w(z+th)-w(z)\in W$, 
\begin{align*}
\Phi_\lambda'(u)\xi &  = \la L_\lambda u-N(u),\xi\ra = \la L_\lambda w(z)-PN(u),\xi\ra + \la L_\lambda z-(I-P)N(u),\xi\ra \\
& = \la L_\lambda z-N(u), th\ra  = t\Phi_\lambda'(u)h.
\end{align*}
Hence
\be\label{8}
\frac{\vp_\lambda(z+th)-\vp_\lambda(z)}{t}=
\Phi_\lambda'(u)h+\frac{\|\xi\|_{L}}{t}\cdot\frac{\Phi_\lambda(u+\xi)-\Phi_\lambda(u)-\Phi_\lambda'(u)\xi}{\|\xi\|_{L}}.
\ee
It follows from (\ref{lip graph}) that
$$
\|\xi\|_{L} \le td\|h\|
$$
for some $d>0$. This, together with the Fr\'echet differentiability of $\Phi_\lambda$ on $X$ (i.e., on $D(L)$ with the graph norm) implies that the second term on the right-hand side of (\ref{8}) tends to 0 as $t\to 0$. So
\[
\lim_{t\to 0^+}\frac{\vp_\lambda(z+th)-\vp_\lambda(z)}{t}=\Phi_\lambda'(u)h =
\la L_\lambda z,h\ra-\la(I-P)N(w(z)+z),h\ra.
\]
Therefore $\vp_\lambda$ is continuously G\^ateaux differentiable, hence continuously Fr\'echet differentiable as well, and the derivative is as claimed. 

 If $z\in Z\setminus\ol B_R(0)$ is a critical point of $\vp_\lambda$, then (\ref{I-P}) with $w=w(\lambda,z)$ is satisfied; this together with (\ref{P}) shows that $u=w(\lambda,z)+z$ solves (\ref{oe}).

 Since $\dim Z<\infty$, in order to prove the last part of the assertion it suffices to  show that $\nabla\vp_\lambda$ is $H$-asymptotically linear (see Remark \ref{as lin}(ii)). If $z_n\to z$ in $Z$, $(t_n)\subset\R$ and $\|t_nz_n\|\to\infty$, then, in view of (\ref{5}),
$$\frac{\nabla\vp_\lambda(t_nz_n)}{t_n}=L_\lambda z_n-\frac{(I-P)N(w(t_nz_n)+t_nz_n)}{t_n}\to L_\lambda z.$$
This concludes the proof. 
\end{proof}

\begin{remark} \label{remphi}
{\em (i) Using \eqref{lip graph} and the fact that $\beta$ in \eqref{beta} is finite, it is easy to see that $\nabla\vp_\lambda$ is Lipschitz continuous on $Z\setminus \ol B_R(0)$ and the Lipschitz constant may be chosen independently of $\lambda\in [-\delta, \delta]$.

(ii) In what follows we may (and will need to) assume that $\vp_\lambda$ is defined on $Z$ and not only on $Z\setminus \ol B_R(0)$. Such an extension of $\vp_\lambda$ can be achieved e.g. as follows. Let $\chi\in C^\infty(\r,[0,1])$ be a cutoff function such that $\chi(t)=0$ for $t\le R+1$ and $\chi(t)=1$ for $t\ge R+2$. Set $\wt\vp_\lambda(z) := \chi(\|z\|)\vp_\lambda(z)$. Then $\wt\vp_\lambda$ is of class $C^1$, Lipschitz continuous and $\wt\vp_\lambda(z) = \vp_\lambda(z)$ for $\|z\|>R+2$. In particular, $z\in Z\setminus \ol B_{\wt R}(0)$, where $\wt R:=R+2$, is a critical point of $\wt\vp_\lambda$ if and only if $u=w(\lambda,z)+z$ solves (\ref{oe}).
}
\end{remark}

\section{Proofs of Theorems \ref{thm1} and \ref{thm1a}} \label{sec:proof1} 

In the proof of Theorem \ref{thm1} we shall need the following version of Whyburn's lemma which may be found in \cite[Proposition 5]{Al}:

\begin{lemma}  \label{whyburn}
Let $Y$ be a compact space and $A,B\subset Y$ closed sets. If there is no connected set $\Gamma\subset Y\setminus (A\cup B)$ such that $\ol \Gamma\cap A\neq\emptyset$ and $\ol \Gamma\cap B\neq\emptyset$ ($\ol\Gamma$ stands for the closure of $\Gamma$ in $Y$), then $A$ and $B$ are separated, i.e. there are open sets $U,V\subset Y$ such that $A\subset U$, $B\subset V$, $U\cap V=\emptyset$ and $Y=U\cup V$ (clearly, $U,V$ are closed as well).
\end{lemma}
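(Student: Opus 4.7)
The plan is to prove the contrapositive: assume $A$ and $B$ \emph{cannot} be separated in $Y$ and exhibit a connected $\Gamma\subset Y\setminus(A\cup B)$ whose closure meets both $A$ and $B$. The first main step is to produce a continuum meeting both $A$ and $B$ via a quasi-component argument. For $y\in Y$ let $Q(y)$ denote the intersection of all clopen subsets of $Y$ containing $y$. If every $y\in A$ satisfied $Q(y)\cap B=\emptyset$, then for each such $y$ we could choose a clopen $U_y\ni y$ disjoint from $B$, and by compactness of $A$ a finite union $U=U_{y_1}\cup\cdots\cup U_{y_n}$ is clopen with $A\subset U$ and $U\cap B=\emptyset$, giving the forbidden separation $(U,Y\setminus U)$. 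So some $y_0\in A$ has $Q(y_0)\cap B\ne\emptyset$, and in the compact Hausdorff setting (the natural hypothesis for this lemma) $Q(y_0)$ coincides with the connected component of $y_0$, hence is a continuum $C\subset Y$ meeting both $A$ and $B$.

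The second step is to locate $\Gamma$ as a connected component of the open subset $C\setminus(A\cup B)$ of $C$. By the boundary-bumping theorem for continua, the closure in $C$ of every such component meets $C\cap(A\cup B)$. If \emph{no} component had closure meeting \emph{both} $A$ and $B$, the components would split into two classes $\mathcal{C}_A$ and $\mathcal{C}_B$ according to whether their closures touch $A$ or $B$, and adjoining $C\cap A$ to the union of the closures of components in $\mathcal{C}_A$ (and similarly $C\cap B$ to those in $\mathcal{C}_B$) would decompose $C$ into two disjoint closed pieces—contradicting connectedness of $C$. The desired $\Gamma$ is then any component for which this sorting fails.

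The main obstacle is verifying that the two pieces in this sorting are genuinely closed: infinitely many components of $C\setminus(A\cup B)$ may accumulate on $A\cup B$, and the union of their closures need not be closed a priori. Establishing this closedness is the technical core of the argument and is precisely what is carried out in \cite[Proposition~5]{Al}, whose verification I would invoke rather than reprove here.
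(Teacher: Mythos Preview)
The paper does not prove this lemma at all: it is stated as a quotation from \cite[Proposition~5]{Al} and used as a black box. So there is no ``paper's own proof'' to compare against; your sketch already goes further than the paper does, and in the end you too defer the technical core to the same reference.

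Your outline is the standard one and is sound in spirit, but two points are worth flagging. First, you insert a Hausdorff hypothesis (``the natural hypothesis for this lemma'') to identify quasi-components with components; the lemma as stated says only that $Y$ is compact, so either this should be added to the statement or you should note that in the intended application $Y$ is a compact subset of a metric space, hence certainly Hausdorff. Second, your argument implicitly uses $A\cap B=\emptyset$ (otherwise a one-point continuum in $A\cap B$ meets both sets but $C\setminus(A\cup B)$ is empty, and the sorting step is vacuous while the separation conclusion is false); again this is harmless in the application, where $A$ and $B$ are visibly disjoint, but it is not part of the stated hypotheses. Aside from these bookkeeping issues, invoking \cite{Al} for the closedness of the two pieces is exactly what the paper does for the whole lemma.
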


\begin{proof}[Proof of Theorem \ref{thm1}]
By Proposition \ref{reduction}, it suffices to consider equation \eqref{I-P} with $w=w(\lambda,z)$ which we re-write in the form
\be \label{redeq}
F_\lambda(z) := L_\lambda z - (I-P)N(w(\lambda,z)+z) = 0.
\ee
As in  assumptions \ref{as1}--\ref{as3}, it causes no loss of generality to take $\lambda_0=0$ and $N'(\infty)=0$. Although $F_\lambda$ in Proposition \ref{reduction} has been defined for $|\lambda|\le\delta$ and $\|z\|\ge R$, we may (and do) extend it continuously to $[-\delta,\delta]\times Z$. Since $w'(\lambda,\infty)=0$ (see (ii) of Proposition \ref{reduction}) and asymptotic linearity coincides with $H$-asymptotic linearity on $Z$ (because $\dim Z<\infty$), we have, setting $K_\lambda(z) := (I-P)N(w(\lambda,z)+z)$ and using Remark \ref{pot}, 
\be \label{asympt}
\lim_{\|z\|\to\infty} \frac{\|K_\lambda(z)\|}{\|z\|} = 0.
\ee
Suppose there is no asymptotic bifurcation at $\lambda_0=0$. Taking smaller $\delta$ and larger $R$ if necessary, $F_\lambda(z)\ne 0$ for any $|\lambda|\le\delta$ and $\|z\|\ge R$. Therefore the Brouwer degree $\deg(F_\lambda, B_R(0),0)$ (see e.g. \cite[Section 3.1]{am}) is well defined and independent of $\lambda\in [-\delta,\delta]$. Since $\delta < \dist(0,\sigma(L)\setminus\{0\})$, $L_{\pm\delta}$ are invertible. It follows therefore from \eqref{asympt} that if $R_0\ge R$ is sufficiently large, then $L_{\pm\delta}z-tK_{\pm\delta}(z)\ne 0$ for any $\|z\|\ge R_0$, $t\in[0,1]$. Hence by the excision property and the homotopy invariance of degree,
\[
k = \deg(F_{\pm\delta},B_R(0),0) = \deg(F_{\pm\delta},B_{R_0}(0),0) = \deg(L_{\pm\delta}|_Z, B_{R_0},0)
\]
for some $k\in\z$. Let $d_1,d_2$ be the number of negative eigenvalues (counted with their multiplicity) of respectively $L_\delta|_Z$ and $L_{-\delta}|_Z$. Then $k=(-1)^{d_1}=(-1)^{d_2}$ \cite[Lemma 3.3]{am}. However, since $d_1 = d_2 + \dim N(L)$ and $\dim N(L)$ is odd, this is impossible. So we have reached a contradiction to the assumption that there is no bifurcation. 

It remains to prove that there exists a bifurcating continuum. Usually this is done by first making the inversion $u\mapsto u/\|u\|^2$ and then showing there is a continuum bifurcating from 0 \cite{ra2, st2, to}. Here we give a slightly different argument avoiding inversion. 
 Let
\[
\Sigma := \{(z,\lambda)\in (Z\setminus B_R(0)) \times [-\delta,\delta]: F_\lambda(z)=0\}.
\]
Compactify $Z$ by adding the point at infinity and let $A:=\ol B_R(0)\times[-\delta,\delta]$, $B:=\{(\infty,0)\}$, $Y:=A\cup\Sigma\cup B$. Then $Y$ is compact, $A$ and $B$ are closed disjoint. We claim that if $R$ is large enough, there is a connected set $\Gamma\subset\Sigma$ such that $\{(\infty,0)\}\in\ol\Gamma$ (the closure taken in $Y$) and $\ol\Gamma\cap A\neq\emptyset$. Otherwise there exist $U$ and $V$ as in Lemma \ref{whyburn}. Since $U$ is compact and bounded, there exists a bounded open set $\mathcal{O}\subset Z\times[-\delta,\delta]$ such that $U\subset \mathcal{O}$ and $\partial \mathcal{O}\cap \Sigma=\emptyset$. Letting $\mathcal{O}_\lambda:= \{z: (z,\lambda)\in \mathcal{O}\}$ for $\lambda\in [-\delta,\delta]$, it follows from the excision property and the generalized version of the homotopy invariance property of degree \cite[Theorem 4.1]{am} that $\deg(F_{\delta}, \mathcal{O}_{\delta},0) = \deg(F_{-\delta}, \mathcal{O}_{-\delta},0)$, a contradiction since by the same argument as above $\deg(F_{\delta}, \mathcal{O}_{\delta},0) = (-1)^{k_1}$, $\deg(F_{-\delta}, \mathcal{O}_{-\delta},0)=(-1)^{k_2}$ and $k_1,k_2$ have different parity.
\end{proof}

In the proof of Theorem \ref{thm1a} we shall use Gromoll-Meyer theory. Below we summarize some pertinent facts which are special cases of much more general results of \cite{ks} where functionals were considered in a Hilbert space $E$ with filtration, i.e., with a sequence $(E_n)$ of subspaces such that $E_n\subset E_{n+1}$ for all $n$ and $\bigcup_{n=1}^\infty E_n$ is dense in $E$. In the terminology of \cite{ks}, here we have the trivial filtration (i.e., $Z_n=Z$ for all $n$) which, together with the fact that $\dim Z<\infty$,  considerably simplifies the proofs. An alternative approach is via the Conley index theory, see e.g. \cite{be, da}, in particular \cite[Corollary 2.3]{be} and \cite[Theorem 2]{da}. 

Let $\vp: Z\to \r$ be a function such that $\nabla\vp$ is locally Lipschitz continuous. Suppose also $K = K(\vp) := \{z\in Z: \nabla\vp(z)=0\}$ is bounded. A pair $(\bbw,\bbw^-)$ of closed subsets of $Z$ will be called \emph{admissible} (for $\vp$ and $K$) if 
\begin{enumerate}
\item[(i)] $K\subset \text{int}(\bbw)$ and $\bbw^-\subset\partial \bbw$;
\item[(ii)] $\vp|_\bbw$ is bounded;
\item[(iii)] There exist a locally Lipschitz continuous vector field $V$ defined in a neighbourhood $N$ of $\bbw$ and a continuous function $\beta: N\to \r^+$ such that $\|V(z)\|\le 1$, $\la V(z),\vp(z)\ra \ge \beta(z)$ for all $z\in N$, and $\beta$ is bounded away from 0 on compact subsets of $N\setminus K$ (we shall call $V$ \emph{admissible} for $(\bbw,\bbw^-)$);
\item[(iv)] $\bbw^-$ is a piecewise $C^1$-manifold of codimension 1, $V$ is transversal to $\bbw^-$, the flow $\eta$ of $-V$ can leave $\bbw$ only via $\bbw^-$ and if $z\in \bbw^-$, then $\eta(t,z)\notin \bbw$ for any $t>0$.  
\end{enumerate}
Let $H^*$ denote the \v Cech (or Alexander-Spanier) cohomology with coefficients in $\z_2$ and let the critical groups $c^*(\vp,K)$ of the pair $(\vp,K)$ be defined by
\[
c^*(\vp,K) := H^*(\bbw,\bbw^-).
\]

\begin{lemma} \label{summary}
Suppose $\vp$ satisfies (PS). \\
(i) For each $R>0$ there exists a bounded admissible pair $(\bbw,\bbw^-)$ for $\vp$ and $K$ such that $B_R(0)\subset \bbw$. \\
(ii) If $(\bbw_1,\bbw_1^-)$ and $(\bbw_2,\bbw_2^-)$ are two admissible pairs for $\vp$ and $K$, then  $H^*(\bbw_1,\bbw_1^-) \cong H^*(\bbw_2,\bbw_2^-)$ (i.e., $c^*(\vp,K)$ is well defined). \\
(iii) Suppose $\{\vp_\lambda\}_{\lambda\in[0,1]}$ is a family of functions satisfying (PS) and such that $\nabla\vp_\lambda$ is locally Lipschitz continuous, $\lambda\mapsto \nabla\vp_\lambda$ is continuous, uniformly on bounded subsets of $Z$, and $K(\vp_\lambda)\subset B_R(0)$ for some $R>0$ and all $\lambda\in[0,1]$. Then $c^*(\vp_\lambda,K(\vp_\lambda))$ is independent of $\lambda$. 
\end{lemma}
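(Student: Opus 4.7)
The plan is to treat the three parts in order, using Gromoll--Meyer techniques specialized to the finite-dimensional situation with trivial filtration $Z_n=Z$, which considerably simplifies the general arguments of \cite{ks}.

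For (i), I would first produce a globally defined admissible vector field. Set $V(z):=\nabla\vp(z)/(1+\|\nabla\vp(z)\|)$; then $V$ is locally Lipschitz, $\|V\|\le 1$, and $\la V(z),\nabla\vp(z)\ra=\|\nabla\vp(z)\|^2/(1+\|\nabla\vp(z)\|)=:\beta(z)$. Since $K$ is bounded and $\vp$ satisfies (PS), $\beta$ is bounded away from $0$ on compact subsets of $Z\setminus K$. To build the pair, choose regular values $a>\max_K\vp$, $b<\min_K\vp$ and $\rho>R$ large enough that $K\cup \ol B_R(0)\subset B_\rho(0)$, and tentatively take
\[
\bbw:=\vp^{-1}([b,a])\cap \ol B_\rho(0),
\]
with $\bbw^-$ consisting of $\vp^{-1}(b)\cap\bbw$ together with the portion of $\bbw\cap\partial B_\rho(0)$ on which $V$ points outward. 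A small generic adjustment of $\rho$ (or a mollification near the corner where $\vp=b$ meets $\partial B_\rho(0)$) yields the transversality and piecewise-$C^1$ structure required in (iv); the flow $\eta$ of $-V$ strictly decreases $\vp$, so it can leave $\bbw$ only through $\bbw^-$.

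For (ii), I would show that any two admissible pairs $(\bbw_1,\bbw_1^-)$ and $(\bbw_2,\bbw_2^-)$ yield isomorphic cohomology by a standard deformation-and-excision argument. Using the flow associated with an admissible vector field (or a common one obtained via a partition of unity in a neighborhood of $K$), each pair deformation-retracts, as a pair, onto a small ``core'' Gromoll--Meyer pair adapted to $K$ contained in $\bbw_1\cap\bbw_2$. Homotopy invariance and continuity of \v Cech cohomology then give $H^*(\bbw_i,\bbw_i^-)\cong H^*(\text{core pair})$ for $i=1,2$, hence the two groups are isomorphic. The finite-dimensionality of $Z$ avoids the filtration/spectral complications in \cite{ks}; an equivalent route proceeds via the Conley index as in \cite{be,da}.

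For (iii), I would use compactness of $[0,1]$ with a continuation argument. The hypotheses---continuity of $\lambda\mapsto\nabla\vp_\lambda$ uniformly on bounded sets together with the uniform inclusion $K(\vp_\lambda)\subset B_R(0)$---imply that each admissibility condition is open in $\lambda$: transversality of the vector field on the exit set and the lower bound on $\beta_\lambda$ are stable under small $C^0$-perturbations of $\nabla\vp_\lambda$, while $K(\vp_\lambda)\subset\text{int}\,\bbw$ follows from the uniform containment. Hence a pair $(\bbw,\bbw^-)$ admissible for $\vp_{\lambda_0}$ (with vector field $V_{\lambda_0}$) remains admissible for all $\vp_\lambda$ with $\lambda$ in some neighborhood $I_{\lambda_0}$, using a slightly adjusted $V_\lambda$. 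Covering $[0,1]$ by finitely many such neighborhoods and applying (ii) on the overlaps yields $c^*(\vp_\lambda,K(\vp_\lambda))=H^*(\bbw,\bbw^-)$ constant in $\lambda$. The main technical obstacle, in my view, is the construction in (i): specifically, ensuring $\bbw^-$ is a genuine piecewise $C^1$-manifold of codimension $1$ with the correct transversality and flow-exit behaviour at the corner $\vp^{-1}(b)\cap\partial B_\rho(0)$. Once this is handled (by smoothing the corner or by working intrinsically with piecewise-smooth manifolds), parts (ii) and (iii) become routine packaging of well-known Gromoll--Meyer deformation-invariance arguments.
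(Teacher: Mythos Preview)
Your outline is broadly correct and close in spirit to the paper's, but part (i) is organized differently, and the difference matters because it eliminates precisely the difficulty you flag as the ``main technical obstacle''.

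For (i), the paper uses the same normalized gradient field $V(z)=\nabla\vp(z)/(1+\|\nabla\vp(z)\|)$, but instead of intersecting a slab $\vp^{-1}([b,a])$ with a ball, it takes $a<\vp<b$ on $B_R(0)$ and sets
\[
\bbw:=\{\eta(t,z):t\ge 0,\ z\in B_R(0),\ \vp(\eta(t,z))\ge a\},\qquad \bbw^-:=\bbw\cap\vp^{-1}(a),
\]
where $\eta$ is the flow of $-V$. Boundedness of $\bbw$ is obtained from (PS), not from truncation by a sphere. The payoff is that $\bbw^-$ is simply a piece of a regular level set of $\vp$, so no corner between $\vp^{-1}(b)$ and $\partial B_\rho(0)$ ever appears and no smoothing or genericity argument is needed. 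In your version, incidentally, you should choose $a,b$ to bound $\vp$ on all of $\ol B_\rho(0)$ (not just on $K$) so that $B_R(0)\subset\bbw$ and $\vp|_\bbw$ is bounded.

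For (ii), the paper's mechanism is a little different from ``retract both pairs to a common small core''. It shows that any admissible pair $(\bbw_0,\bbw_0^-)$ has the same cohomology as the sublevel pair $(\vp^{-1}([a,b]),\vp^{-1}(a))$ for $a<\vp|_{\bbw_0}<b$: one glues the given admissible field $V_0$ with the normalized gradient via a Lipschitz partition of unity, flows $\bbw_0^-$ forward to level $a$ to form an intermediate pair, and then applies the exact cohomology sequence of a triple together with strong excision. Your core--pair idea should also go through in finite dimensions, but the paper's comparison with a fixed sublevel pair is more concrete.

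Part (iii) is essentially as you describe. The paper makes your ``slightly adjusted $V_\lambda$'' explicit by setting $\wt V=\chi_1 V_\lambda+\chi_2 V_{\lambda_0}$ with $\{\chi_1,\chi_2\}$ subordinate to $B_{R_1}(0)$ and $\bbw_{\lambda_0}\setminus\ol B_R(0)$ (where $R_1>R$ and $B_{R_1}(0)\subset\bbw_{\lambda_0}$); near $\partial\bbw_{\lambda_0}$ the field then coincides with $V_{\lambda_0}$, so transversality and the exit behaviour are preserved automatically for $\lambda$ close to $\lambda_0$.
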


This lemma corresponds to Lemma 2.13 and Propositions 2.12, 2.14 in \cite{ks}. Note that condition (PS)$^*$ there is in our setting (i.e. for trivial filtration) equivalent to (PS). 

\begin{proof}[Outline of proof]
(i) Choose $R,a,b$ so that $K\subset B_R(0)$, $a<\vp(z)<b$ for all $z\in B_R(0)$ and let 
\be \label{vf}
V(z) := \frac{\nabla\vp(z)}{1+\|\nabla\vp(z)\|}. 
\ee
Clearly, the flow $\eta$ given by
\[
\frac{d\eta}{dt} = -V(\eta), \quad \eta(0,z)=z
\]
is defined on $\r\times Z$. Let 
\[
\bbw := \{\eta(t,z): t\ge 0,\ z\in B_R(0),\ \vp(\eta(t,z))\ge a\}, \quad \bbw^-:= \bbw\cap\vp^{-1}(a). 
\]
Then $(\bbw,\bbw^-)$ is an admissible pair. The proof follows that of \cite[Lemma 2.13]{ks} but is simpler - there is no need for using cutoff functions. Note that (here and below) the Palais-Smale condition rules out the possibility that $\vp(\eta(t,z))>a$ and $\|\eta(t,z)\|\to\infty$ as $t\to\infty$, hence $t\mapsto \eta(t,z)$ either approaches $K$ as $t\to\infty$ or hits $\bbw^-=\vp^{-1}(a)$ in finite time.

(ii) Assume that $\vp$ is unbounded below and above (the other cases are simpler but somewhat different). Let $(\bbw_0,\bbw_0^-)$ be an admissible pair and $V_0$ a corresponding admissible vector field. As $\vp|_{\bbw_0}$ is bounded, we may choose $a,b$ so that $a<\vp(z)<b$ for all $z\in\bbw_0$. Since $(\bbw_1,\bbw_1^-) := (\vp^{-1}([a,b]),\vp^{-1}(a))$ is an admissible pair, it suffices to show that $H^*(\bbw_0,\bbw_0^-)\cong H^*(\bbw_1,\bbw_1^-)$. Put $V(z) := \chi_0(z)V_0(z)+\chi_1(z)V_1(z)$, where $V_1$ is given by \eqref{vf} and $\{\chi_0,\chi_1\}$ is a Lipschitz continuous partition of unity such that $\chi_0(z)=1$ on $\bbw_0$ and $\chi_1(z)=1$ in a neighbourhood of $\partial \bbw_1$. Denote the flow of $-V$ by $\eta$. Let $A := \{\eta(t,z): t\ge 0,\ z\in \bbw_0^-\}\cap \bbw_1$ and $\bbw = \bbw_0\cup A$, $\bbw^- := \bbw\cap \bbw_1^-$. Then $(\bbw,\bbw^-)$ is a an admissible pair and using $\eta$ one obtains a strong deformation retraction of $A$ onto $\bbw^-$. So $H^*(A,\bbw^-)=0$ and by exactness of the cohomology sequence of the triple $(\bbw,A,\bbw^-)$ and the strong excision property we have $H^*(\bbw,\bbw^-)\cong H^*(\bbw,A) \cong H^*(\bbw_0,\bbw_0^-)$. We also have, by excision again, $H^*(\bbw,\bbw^-)\cong H^*(\bbw\cup \bbw_1^-,\bbw_1^-)$. Finally, using the flow $\eta$ once more, we obtain a deformation of $(\bbw_1,\bbw_1^-)$ into $(\bbw\cup \bbw_1^-,\bbw_1^-)$ which leaves $\bbw\cup \bbw_1^-$ and $\bbw_1^-$ invariant. Hence $(\bbw\cup \bbw_1^-,\bbw_1^-)$ and $(\bbw_1,\bbw_1^-)$ are homotopy equivalent and thus have the same cohomology. Putting everything together gives $H^*(\bbw_0,\bbw_0^-)\cong H^*(\bbw_1,\bbw_1^-)$. More details of the proof may be found in \cite[Propositions 2.12 and 2.7]{ks}. 

(iii) Let $\lambda_0\in[0,1]$. It suffices to show that $c^*(\vp_\lambda,K(\vp_\lambda))$ is constant for $\lambda$ in a neighbourhood of $\lambda_0$. Denote the vector field for $\vp_\lambda$ given as in \eqref{vf} by $V_\lambda$ and choose an admissible pair $(\bbw_{\lambda_0},\bbw_{\lambda_0}^-)$ for $\vp_{\lambda_0}$ and $K(\vp_{\lambda_0})$ such that $B_{R_1}(0)\subset \bbw_{\lambda_0}$, where $R_1>R$. By the construction in (i), we may assume $V_{\lambda_0}$ is admissible for this pair. Let $\wt V(z) := \chi_1(z)V_\lambda(z)+\chi_2(z)V_{\lambda_0}(z)$, where $\{\chi_1,\chi_2\}$ is a partition of unity subordinate to the sets $B_{R_1}(0)$ and $\bbw_{\lambda_0}\setminus \ol B_R(0)$. 
It is easy to see that if $|\lambda-\lambda_0|$ is small enough, then $(\bbw_{\lambda_0},\bbw_{\lambda_0}^-)$ is an admissible pair for $\vp_\lambda$, $K(\vp_\lambda)$ and $\wt V$ is a corresponding admissible field. Note in particular that 
\[
\|\nabla\vp_\lambda(z)\| \ge \|\nabla\vp_{\lambda_0}(z)\| - \|\nabla\vp_\lambda(z)-\nabla\vp_{\lambda_0}(z)\|>0
\]
for $z\in \bbw_{\lambda_0}\setminus \ol B_R(0)$, so indeed $\wt V$ is admissible. Hence $c^*(\vp_\lambda,K(\vp_\lambda)) \cong c^*(\vp_{\lambda_0},K(\vp_{\lambda_0}))$. 
\end{proof}

\begin{proof}[Proof of Theorem \ref{thm1a}]
Let $\vp_\lambda$ be given by \eqref{redfcl} and extend it to the whole space $Z$ according to Remark \ref{remphi}. If $\lambda_0=0$ is not an asymptotic bifurcation point for \eqref{oe}, then it follows from Proposition \ref{varred} that $\nabla\vp_\lambda(z)\ne 0$ for $\lambda\in[-\delta,\delta]$ and $\|z\|>R$, possibly after choosing a smaller $\delta$ and larger $R$. By assumption, $\vp_0$ satisfies (PS) and since $L_\lambda$ has bounded inverse if $0<|\lambda|\le\delta$, we see using \eqref{asympt} that $\nabla\vp_\lambda$ is bounded away from 0 as $\|z\|\to\infty$. Hence all $\vp_\lambda$, $|\lambda|\le\delta$, satisfy (PS). By Lemma \ref{summary}, $c^*(\vp_\lambda, K(\vp_\lambda))$ is independent of $\lambda\in[-\delta,\delta]$. For $\lambda=\delta$, let $Z=Z_\delta^+\oplus Z_\delta^-$ and $z=z^++z^-\in Z_\delta^+\oplus Z_\delta^-$, where $Z_\delta^\pm$ are the maximal $L_\delta$-invariant subspaces of $Z$ on which $L_\delta$ is respectively positive and negative definite.  Choose $\eps>0$ such that $\la \pm L_\delta z^\pm,z^\pm \ra \ge \eps\|z^\pm\|^2$ and let 
\[
\bbw := \{z\in Z: \|z^+\|\le R_0,\ \|z^-\|\le R_0\}, \quad \bbw^- := \{z\in \bbw: \|z^-\|=R_0\}.
\]
Recall $K_\lambda(z) = (I-P)N(w(\lambda,z)+z)$. Taking a sufficiently large $R_0$,
\[
\la \nabla\vp_\delta(z),z^+\ra = \la L_\delta z,z^+\ra - \la K_\delta(z),z^+\ra \ge \eps\|z^+\|^2-\frac14\eps \|z\|\,\|z^+\| > 0, \quad z\in \bbw,\ \|z^+\|=R_0.
\]
Similarly,
\[
\la \nabla\vp_\delta(z),z^-\ra < 0, \quad z\in \bbw,\ z^-\in \bbw^-.
\]
So the flow of $-\nabla\vp_\delta$ is transversal to $\bbw^-$ and can leave $\bbw$ only via $\bbw^-$. Hence $(\bbw,\bbw^-)$ is an admissible pair for $\vp_\delta$ and $K(\vp_\delta)$, and $V=\nabla\vp_\delta$ is a corresponding admissible vector field. Note that this pair is also admissible for the quadratic functional $\Psi_\delta(z) := \frac12\la L_\delta z,z\ra$. Since 0 is the only critical point of $\Psi_\delta$, it follows e.g. from \cite[Corollary 8.3]{mw} that if $m$ is the Morse index of $\Psi_\delta$, then 
\[
c^q(\vp_\delta, K(\vp_\delta)) = c^q(\Psi_\delta,0) = \delta_{q,m} \z_2. 
\]
A similar argument shows that $c^q(\vp_{-\delta}, K(\vp_{-\delta})) = \delta_{q,n}\z_2$, where $n$ is the Morse index of $\Psi_{-\delta}$. As the Morse index changes (by $\dim N(L)$) when $\lambda$ passes through 0, $m\ne n$ and $c^*(\vp_\delta, K(\vp_\delta))\ne c^*(\vp_{-\delta}, K(\vp_{-\delta}))$. This is the desired contradiction. 
\end{proof}

\section{Proofs of Theorems \ref{thm2} and \ref{thm2a}} \label{sec:proof2}

We assume throughout this section that $V\in L^\infty(\rn)$ and $f$ satisfies $(f_1)$-$(f_3)$. We consider equation \eqref{se} which we  re-write in the form
\be \label{se2}
-\Delta u + V_0(x)u = \lambda u + g(x,u), \quad x\in\rn,
\ee
where we have put $V_0(x):=V(x)-m(x)$ and $g(x,u):=f(x,u)-m(x)u$. Let $\lambda_0$ be an isolated eigenvalue of finite multiplicity for $-\Delta+V_0$. Replacing $V_0(x)$ by $V_0(x)-\lambda_0$ we may assume without loss of generality that $\lambda_0=0$. 

Let $E := L^2(\rn)$ and $Lu := -\Delta u+V_0(x)u$. As we have pointed ot in the introduction, $L$ is a selfadjoint operator whose domain is the Sobolev space $H^2(\rn)$ and the graph norm of $L$ is equivalent to the Sobolev norm. (A brief argument: using the Fourier transform one readily sees that $-\Delta + 1: H^2(\rn) \to L^2(\rn)$ is an isomorphism; hence the conclusion follows because $V\in L^\infty(\rn)$.)

We define the operator $N$ (the Nemytskii operator) by setting
$$N(u):=g(\cdot,u(\cdot)),\quad u\in E.$$
It follows from $(f_1)$ and Krasnoselskii's theorem \cite[Theorems 2.1 and 2.3]{kr} that  $N:E\to E$ is well defined and continuous. Let
$$G(x,s):=\int_0^sg(x,\xi)\,d\xi,\quad x\in\R^N,\ s\in\R$$
and
$$\psi(u):=\irn G(x,u)\,dx,\quad u\in  E.$$
Then $\psi\in C^1(E,\R)$ and
$$\nabla\psi(u)=N(u),$$
see \cite[Lemma 5.1]{kr}. Furthermore, let 
\[
\Phi_\lambda(u) := \frac12\la Lu-\lambda u,u\ra - \psi(u),\quad u\in X:= H^2(\rn).
\]
Then $\Phi_\lambda\in C^1(X,\r)$ and $\Phi_\lambda'(u)=0$ if and only if $u$ is a solution of \eqref{se2}. 

\begin{proof}[Proof of Theorem \ref{thm2}.]
We verify the assumptions of Theorem \ref{thm1}. First we show that $N$ is $H$-asymptotically linear and $N'(\infty)=0$. Let $u_n\to u$ and $\|t_nu_n\|\to\infty$ in $E$. Assume passing to a subsequence that $u_n\to u$ a.e. Since
\[
\frac{g(x,t_nu_n)^2}{t_n^2 } \le \left(\frac{\alpha(x)}{t_n} + (\beta+\|m\|_\infty)|u_n|\right)^2
\]
and $g(x,s)/s\to 0$ as $|s|\to\infty$, it follows from the Lebesgue dominated convergence theorem that
\[
\lim_{n\to\infty} \frac{\|N(u_n)\|^2}{t_n^2} = \irn \lim_{n\to\infty}\frac{g(x,t_nu_n)^2}{(t_nu_n)^2}u_n^2\,dx = 0.  
\]
Hence (i) of Theorem \ref{thm1} is satisfied. 
Since $\Lip_\infty(N) = \Lip_\infty(g) \le \Lip(g) < \dist(0,\sigma_e(L))$ (where the second inequality follows by assumption), also (ii) of this theorem holds. This completes the proof.
\end{proof}

\begin{remark} \label{nobif}
\emph{
As we have mentioned in the introduction, the condition $\Lip(g) < \dist(\lambda_0, \sigma_e(L))$ is sharp in the sense that there may be no asymptotic bifurcation if $\Lip(g) > \dist(\lambda_0, \sigma_e(L))$ and other assumptions of Theorem \ref{thm2} are satisfied. Let $N=1$ and suppose $V_0\in C^1(\r)$, $V_0'(x)\le 0$ for $x$ large, $\lim_{|x|\to\infty}V_0(x) = V_0(\infty)$ exists and $ \inf\{\la Lu,u\ra: \|u\|_2=1\} < V_0(\infty)$. Then $\sigma_e(L) = [V_0(\infty),\infty)$ and $\lambda_0 := \inf\sigma(L) < \inf\sigma_e(L)$ is a simple eigenvalue. Assume without loss of generality that $\lambda_0=0$. Assume also that $g$ is independent of $x$, of class $C^1$, $g(0)=\lim_{|s|\to\infty}g(s)/s=0$ and $\xi := V_0(\infty) + g'(0) < 0$. Given $\eps>0$, we may choose $g$ so that $\Lip(g)=-g'(0)\in (V_0(\infty), V_0(\infty)+\eps)$. So 
\[
\Lip(g)-\eps < \dist(0,\sigma_e(L)) = V_0(\infty) < \Lip(g), 
\]
and according to \cite[Theorem 5.4]{st3} and the remarks following it, there is no asymptotic bifurcation at any $\lambda>\xi$, in particular, not at $\lambda_0=0$. See also the explicit Example 1 after the proof of Theorem 5.4 in \cite{st3}. A similar conclusion holds for $N\ge 2$, see \cite[Theorem 5.6]{st3}. 
}
\end{remark}

In the proof of Theorem \ref{thm2a} we shall need an auxiliary result.  Let $\lambda_0=0$ and write $w(z)=w(0,z)$. Then $w(z)$ satisfies equation \eqref{P}, i.e. we have
\[
Lw(z) = PN(w(z)+z).
\]

\begin{lemma} \label{bound}
Suppose $(f_1)$-$(f_4)$ are satisfied. Then $\|w(z)\|_\infty \le C$ for some constant $C>0$ and all $\|z\| > R$. 
\end{lemma}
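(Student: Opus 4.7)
My plan is to proceed in two steps: first establish that the right-hand side $h := PN(w+z)$ of the equation $Lw = PN(w+z)$ has $L^\infty$-norm bounded independently of $z$, and then convert this pointwise bound on $h$ into a uniform pointwise bound on $w$ via the exponential decay of the integral kernel of $L_W^{-1}P$.

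For the first step, I would begin by using hypothesis $(f_4)$, which supplies a constant $K$ with $|g(x,s)| \le K$ for all $x, s$, so that $\|N(w+z)\|_\infty = \|g(\cdot,w+z)\|_\infty \le K$ independently of $w$ and $z$.  I would then pick an orthonormal basis $\{e_1,\ldots,e_m\}$ of the finite-dimensional subspace $Z = W^\perp$ consisting of eigenfunctions of $L$ for the eigenvalues in $D := (-d,d)\cap\sigma(L)$, where $d := \dist(0,\sigma_e(L)) > 0$.  Since each such eigenvalue is isolated from $\sigma_e(L)$, classical Agmon-type estimates guarantee that every $e_j$ decays exponentially, so that $e_j \in L^1(\rn) \cap L^\infty(\rn)$.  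Therefore $|\la g(\cdot,w+z), e_j\ra| \le K\|e_j\|_1$, and writing $PN(w+z) = N(w+z) - \sum_j \la N(w+z), e_j\ra e_j$ gives
\[
\|h\|_\infty \le K + \sum_{j=1}^m K\|e_j\|_1\|e_j\|_\infty =: C_1,
\]
a constant independent of $z$ (and of $w$).

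For the second step, since $h \in W$ the equation yields $w = L_W^{-1} h$, with $L_W$ selfadjoint on $W$ and $\dist(0,\sigma(L_W)) \ge d > 0$.  Here I would invoke the Combes--Thomas estimate for selfadjoint Schr\"odinger operators with bounded potential and a spectral gap: it provides exponential decay of the integral kernel $G_W(x,y)$ of $L_W^{-1}P$, of the form
\[
|G_W(x,y)| \le C_0(1+|x-y|)^{2-N}e^{-\kappa|x-y|}
\]
(with the usual modification in dimensions $N \le 2$) for some $\kappa > 0$.  Consequently $\sup_x \|G_W(x,\cdot)\|_{L^1(\rn)} < \infty$, so $L_W^{-1}P\colon L^\infty(\rn) \to L^\infty(\rn)$ is bounded, and combining with the first step yields
\[
\|w\|_\infty \le \|L_W^{-1}P\|_{L^\infty\to L^\infty}\|h\|_\infty \le C,
\]
uniformly in $z$ with $\|z\| > R$.

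The main obstacle I anticipate is precisely the second step.  A purely elementary bootstrap based on local elliptic (Moser/Serrin) regularity gives only $\|w\|_{L^\infty(B_1(x))} \lesssim \|w\|_{L^2(B_2(x))} + \|h\|_\infty$, and this does \emph{not} close uniformly in $z$: the estimates from the proof of Proposition \ref{reduction} only yield $\|w\|_{L^2(\rn)} \lesssim 1 + \|z\|_2$, hence no uniform-in-$x$ bound on the local $L^2$ norm is available.  It is the spectral-gap-driven Combes--Thomas decay of the kernel of $L_W^{-1}P$ that converts the pointwise control on $h$ into a uniform pointwise control on $w$.
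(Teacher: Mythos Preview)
Your Step~1 is correct and is essentially what the paper does: $(f_4)$ gives $\|N(w+z)\|_\infty\le K$, exponential decay of the eigenfunctions spanning $Z$ gives $Q=I-P$ a bounded extension to $L^\infty$, and hence $\|h\|_\infty\le C_1$ uniformly in $z$.

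Step~2 is where your route and the paper's diverge, and where your argument has a genuine gap. The Combes--Thomas estimate, as normally stated, gives exponential off-diagonal decay of the kernel of $(H-E)^{-1}$ for a Schr\"odinger operator $H=-\Delta+V$ and $E\notin\sigma(H)$. Here $0\in\sigma(L)$, so there is no such resolvent at $0$; and $L_W$ is \emph{not} a Schr\"odinger operator on $L^2(\rn)$ but the restriction of $L$ to an infinite-codimension-finite invariant subspace, so Combes--Thomas does not apply to it off the shelf. To salvage your approach you would have to realize the reduced resolvent as, say, $L_W^{-1}P=(L+cQ)^{-1}P$ for a scalar $c$ making $L+cQ$ invertible, and then argue that the Combes--Thomas conjugation argument survives the finite-rank perturbation $cQ$ (it does, because the kernel of $Q$ decays exponentially, so $e^{\kappa a}Qe^{-\kappa a}$ stays bounded for $\kappa$ small), and finally combine this weighted $L^2$ bound with local elliptic regularity to get the pointwise kernel estimate you claim. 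This is all doable, but it is not the one-line citation you present.

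The paper instead avoids kernel estimates entirely. It invokes the Hempel--Voigt theorem on $p$-independence of the spectrum of $-\Delta+V_0$: the operator $\wt L$ acting on $L^\infty(\rn)$ has the same spectrum and the same Riesz projections (agreeing with those of $L$ on $L^2\cap L^\infty$) as $L$. Hence there is an $\wt L$-invariant splitting $L^\infty=Z\oplus\wt W$, and $\wt L|_{\wt W}$ has bounded inverse on $L^\infty$ by abstract spectral theory. One then sets $\wt w:=\wt L|_{\wt W}^{-1}h$, which is uniformly bounded in $L^\infty$, and shows $w=\wt w$ by a short resolvent-limit argument. This is softer and cites a single black-box result; your approach is more hands-on and, once the gap above is filled, would yield the same conclusion together with explicit decay information on the Green kernel that the paper's method does not provide.
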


\begin{proof}
Recall $L := -\Delta+V_0$, where $L: D(L)\subset L^2(\rn)\to L^2(\rn)$. We also define $\wt L:= -\Delta+V_0$ when $-\Delta+V_0$ is regarded as an operator in $L^\infty(\rn)$ (i.e.,  $\wt L: D(\wt L)\subset L^\infty(\rn) \to L^\infty(\rn)$). By \cite[Theorem]{hv}, $\sigma(L)=\sigma(\wt L)$ and isolated eigenvalues of $L$ and $\wt L$ have the same multiplicity. Since $Z$ is spanned by eigenfunctions of $-\Delta +V_0$ corresponding to isolated eigenvalues and such eigenfunctions decay exponentially \cite[Theorem C.3.4]{Simon}, $Z\subset L^2(\rn)\cap L^\infty(\rn)$. It follows therefore from \cite[Theorem III.6.17]{Kato} that  there is an $L$-invariant decomposition $L^\infty(\rn) = \wt Z\oplus\wt W$, where $\wt Z=Z$. Moreover, by \cite[(III.6.19)]{Kato},
\[
Q := I-P = -\frac1{2\pi i}\int_\gamma (L-\lambda I)^{-1}\,d\lambda,
\]
where $\gamma$ is a smooth simple closed curve (in $\mathbb{C}$) which encloses all eigenvalues corresponding to $Z$ and no other points in $\sigma(L)$. By \cite[Proposition 2.1]{hv}, $(L-\lambda I)^{-1}|_{L^2(\rn)\cap L^\infty(\rn)} = (\wt L-\lambda I)^{-1}|_{L^2(\rn)\cap L^\infty(\rn)}$. Hence $Q|_{L^2(\rn)\cap L^\infty(\rn)} = \wt Q|_{L^2(\rn)\cap L^\infty(\rn)} $, where $\wt Q$ denotes the $\wt L$-inva\-riant projection of $L^\infty(\rn)$ on $Z$, and the same equality holds for $P$ and $\wt P:=I-\wt Q$.
$\wt P$ is a projection on a subspace of finite codimension, hence it is continuous and therefore $(f_1)$, $(f_4)$ imply $y=y(z):=PN(w(z)+z)\in L^2(\rn)\cap L^\infty(\rn)$ and $\|y\|_\infty\le C_1$ for some $C_1$ independent of $z\in Z\setminus \ol B_R(0)$. Since $L|_{\wt W}$ has bounded inverse, $\|\wt w\|_\infty\le C$, where  $\wt w=\wt w(z):=\wt L^{-1}y$ (note that for $w=w(z)=L^{-1}y$ we only have a $z$-dependent $L^2$-bound because $N(w+z)$ is not uniformly bounded in $L^2(\rn)$).  
 
 We complete the proof by showing that $w=\wt w$. Let $\mu_n\notin\sigma(L)$, $\mu_n\to 0$. By the resolvent equation  \cite[(I.5.5) and \S III.6.1]{Kato},
 \[
w = L^{-1}y = (L-\mu_nI)^{-1}y  - \mu_n L^{-1}(L-\mu_nI)^{-1}y
 \] 
 and
 \[
\wt w = \wt L^{-1}y = (\wt L-\mu_nI)^{-1}y  - \mu_n \wt L^{-1}(\wt L-\mu_nI)^{-1}y.
 \] 
Let $v_n:=(L-\mu_nI)^{-1}y$. Since $y\in L^2(\rn)\cap L^\infty(\rn)$,  \cite[Proposition 2.1]{hv} implies  $v_n=(\wt L-\mu_nI)^{-1}y$ as well. As the last term on each of the right-hand sides above tends to 0, $(v_n)$ is a Cauchy sequence in $L^2(\rn)\cap L^\infty(\rn)$ (with the norm $\|\cdot\|_{L^2\cap L^\infty} := \|\cdot\|_2+\|\cdot\|_\infty$) which yields $w=\wt w$.  
\end{proof}

\begin{proof}[Proof of Theorem \ref{thm2a}.]
We have already shown that assumptions (i)-(iii) of Theorem \ref{thm1a} are satisfied. Suppose first that $(f_4)$ and $(f_5)$ hold. We only need to verify that $\vp_0$ satisfies (PS).  
 Recall from \eqref{redfcl} that for $\|z\|>R$
\[
\vp_0(z) = \Phi_0(w(z)+z),
\]
where we have put $w(z)=w(0,z)$, and by Proposition \ref{varred}, we have
\be \label{gradphi}
\la \nabla\vp_0(z),\zeta\ra = \la Lz,\zeta\ra - \irn g(x,w(z)+z)\zeta\,dx \quad \text{for all } z,\zeta\in Z,\ \|z\|>R.
\ee
Let $z=z^++z^-+z^0 \in Z^+\oplus Z^-\oplus Z^0$, where $Z^+, Z^-$ respectively denote the subspaces of $Z$ corresponding to the positive and the negative part of the spectrum of $L|_Z$ and $Z^0:=N(L)\subset Z$. 
Let $(z_n)\subset Z$ be such that $\nabla\vp_0(z_n)\to 0$. It suffices to consider $z_n$ with $\|z_n\|>R$, and we shall show that $(z_n)$ is bounded.
Since $Z$ is spanned by eigenfunctions of $-\Delta +V_0$ and $\dim Z<\infty$, it follows from \cite[Theorem C.3.4]{Simon} that there are constants $\delta,C_0>0$ such that $|z(x)|\leq C_0e^{-\delta|x|}$ for all $x\in\R^N$ and all $z\in Z$ with $\|z\|\le 1$. In particular, such $z$ are uniformly bounded in $L^p(\R^N)$ for any $p\in[1,\infty]$. Using this, $(f_4)$ and equivalence of norms in $Z$, we obtain
\[ 
\left| \la Lz_n^+,z\ra \right| \le \left|\irn g(x,w(z_n)+z_n)z\,dx \right| + o(1)\|z\| \le c_1\|z\| \le c_2 \quad \text{for all } z\in Z^+,\ \|z\|=1.
\]
Hence $(z_n^+)$ is bounded and a similar argument shows that so is $(z_n^-)$. Suppose $\|z_n^0\| \to \infty$ and write $z_n^0 = t_n\wt z_n^0$, where $\|\wt z_n^0\|=1$. Passing to a subsequence, $\wt z_n^0\to\wt z^0\in Z^0$. Denote
\[
v_n := w(z_n)+z_n^++z_n^-.
\]
 We shall obtain a contradiction with the assumption  $\nabla\vp_0(z_n)\to 0$ by showing that
\be \label{notto}
\la -\nabla\vp_0(z_n),\wt z_n^0 \ra = \irn g(x,v_n+t_n\wt z_n^0)\wt z_n^0\,dx \not\to 0.
\ee
By Lemma \ref{bound}, the sequence $(w(z_n))$ is bounded in $L^\infty(\rn)$, and since so are the sequences $(z_n^\pm)$, $v_n(x)+t_n\wt z_n^0(x)\to\pm\infty$ for all $x\in A_\pm:=\{x\in\R^N:\pm\wt z^0(x)>0\}$.

Suppose $\pm g_\pm\ge 0$. Since $g$ is bounded and $\wt z_n^0$ is uniformly bounded in $L^1(\rn)$, we may use the Lebesgue dominated convergence theorem to obtain
\[
\lim_{n\to\infty} \int_{A_\pm}g(x,v_n+t_n\wt z_n^0)\wt z_n^0\,dx = \int_{A_\pm} g_\pm\wt z^0\,dx \ge 0.
\]
By the unique continuation property \cite[Proposition 3 and Remark 2]{defg}, $\wt z^0(x)\ne 0$ a.e. Hence the measure of $\rn\setminus (A_+\cup A_-)$ is 0 and thus
\be \label{lal}
\int_{A_+} g_+ \wt z^0\,dx + \int_{A_-} g_-  \wt z^0\,dx > 0.
\ee
This implies \eqref{notto}. If $\pm g_\pm\le 0$, the same argument remains valid after making some obvious changes.

Suppose now that $(f_4)$ and $(f_6)$ are satisfied. Here we do not know whether (PS) holds for $\vp_0$, however, we will construct an admissible pair directly by adapting an argument in \cite{ko}, see in particular the proof of Theorem 4.5 there. Suppose $g(x,s)s\ge 0$ in $(f_6)$ and let
\[
\bbw := \{z\in Z: \|z^\pm\|\le R_0,\ \|z^0\|\le R_1\}, \quad \bbw^- := \{z\in \bbw: \|z^-\|=R_0 \text{ or } \|z^0\|=R_1\}
\]
($R_0, R_1$ to be determined). Boundedness of $g$ and equivalence of norms in $Z$ yield
\[
\left|\irn g(x,w(z)+z)z^+\,dx\right| \le c_3\|z^+\|. 
\]
Since $\la \pm Lz,z^\pm\ra \ge \eps\|z^\pm\|^2$ for some $\eps>0$, $\la \nabla\vp_0(z),z^+\ra \ge \eps \|z^+\|^2-c_3\|z^+\|>0$ if $\|z^+\|=R_0$ and $\la \nabla\vp_0(z),z^-\ra < 0$ if $\|z^-\|=R_0$ provided $R_0$ is large enough. We want to show that there exists a (large) $R_1$ such that $\la \nabla\vp_0(z),z^0\ra < 0$ for $z$ with $\|z^-\|=R_0$ and $\|z^0\|=R_1$. Assuming the contrary, $\liminf_{n\to\infty} \la \nabla\vp_0(z_n),z_n^0 \ra\ge 0$ for a sequence $(z_n)$ such that $\|z_n^0\|\to\infty$. Below we use the same notation as in \eqref{notto}. We have 
\[
0 = \la -\nabla\vp_0(z_n),w(z_n)\ra = \irn g(x,v_n+t_n\wt z^0_n)w(z_n)\,dx,
\]
 $g(x,s)\to 0$ as $|s|\to\infty$ (because $h_\pm\in L^\infty(\rn)$ by $(f_6)$) and $|g(x,v_n+t_n\wt z_n^0)z_n^\pm| \le c_4 e^{-\delta|x|}$. So  according to the Lebesgue dominated convergence theorem, 
\[
\lim_{n\to\infty} \irn g(x,v_n+t_n\wt z_n^0)v_n\, dx = 0. 
\]
Hence Fatou's lemma and $(f_6)$ give
\[
\liminf_{n\to\infty} \int_{A_\pm}g(x,v_n+t_n\wt z_n^0)t_n\wt z_n^0\,dx =  \liminf_{n\to\infty} \int _{A_\pm}  g(x,v_n+t_n\wt z_n^0)(v_n+t_n\wt z_n^0)\,dx
\ge \int_{A_\pm} h_\pm \,dx \ge 0.
\]
Since by assumption at least one of the integrals on the right-hand side is positive (possibly infinite), 
\[
\liminf_{n\to\infty} \la -\nabla\vp_0(z_n),z_n^0 \ra = \liminf_{n\to\infty} \irn g(x,v_n+t_n\wt z_n^0)t_n\wt z_n^0\,dx > 0,
\]
a contradiction. So $R_1$ exists as required and $(\bbw,\bbw^-)$ is an admissible pair. Now it is easy to see as in the proof of (iii) of Lemma \ref{summary} that this is also an admissible pair for $\vp_{\pm\delta}$ if $\delta$ is small enough. As in the proof of Theorem \ref{thm1a} one shows that the critical groups for $\vp_\delta$ and $\vp_{-\delta}$ are different, and this forces  bifurcation. 

If $g(x,s)s\le 0$, a similar argument shows that $\la \nabla\vp_0(z),z^0\ra > 0$ for some $R_1$, hence the exit set for the flow is $\bbw^- := \{z\in \bbw: \|z^-\|=R_0\}$. 
\end{proof}

\begin{remark}
\emph{
Note that \eqref{lal} is a variant of the Landesman-Lazer condition introduced in \cite{ll} and Theorem \ref{thm2a} remains valid if one assumes \eqref{lal} holds for all $z\in N(L)$. This is slightly less restrictive than $(f_5)$. The reason that we have chosen $(f_5)$ is that it is a general condition on $f$, with no reference to eigenfunctions corresponding to $\lambda_0$. $(f_6)$ is a kind of strong resonance condition because $g(x,s)\to 0$ as $|s|\to\infty$. Note also that our arguments show that under the assumptions of Theorem \ref{thm2a} there is a uniform bound for solutions of \eqref{se} with $\lambda=\lambda_0$. 
}
\end{remark}

\noindent\textbf{Acknowledgements.} We would like to thank Charles Stuart for pointing out the references \cite{dh, to2} to us.

\end{document}